\documentclass[journal]{IEEEtran}

\usepackage[T1]{fontenc}    % 关键：支持重音字符（如ò）、特殊符号的正确编码
\usepackage[utf8]{inputenc} % 支持UTF-8编码输入（兼容中文/特殊字符）
\usepackage{amsthm}
\usepackage{cuted} % 必须加载
\usepackage{amsmath}
\newtheorem{theorem}{Theorem}
\newtheorem{lemma}{Lemma}
\usepackage{amsmath,amssymb,amsfonts}
\usepackage[greek,english]{babel}
\usepackage{upgreek}
\usepackage{setspace}
\newtheorem{assumption}{Assumption}
\newtheorem{remark}{Remark}
\usepackage{xcolor} 
\theoremstyle{definition}
\newtheorem{definition}{Definition}
\usepackage[usedefaultargs]{probsoln}
\usepackage{graphicx}
\usepackage{textcomp}
\usepackage{float}
\usepackage{xcolor}
\newtheorem{problem}{Problem}

\usepackage{algorithmic}
\usepackage{algorithm}

\usepackage{booktabs} % 用于专业表格排版的宏包
\usepackage{array}    % 增强表格功能
\usepackage{caption}
\usepackage{cite}
\usepackage{diagbox} % 必须添加这个宏包用于绘制斜线
\usepackage{booktabs}
\usepackage{cuted}
\usepackage{mathtools}

\allowdisplaybreaks[4]

\usepackage{enumitem}
\setlist[enumerate]{topsep=2pt,itemsep=0pt,parsep=0pt,partopsep=0pt}

\usepackage{multirow} % 多行合并
\usepackage{float}
\floatstyle{ruled}
\restylefloat{algorithm}

\newtheorem{example}{Example}

\makeatletter
\def\fs@ruled{%
  \let\@fs@cfont\bfseries
  \def\@fs@pre{\hrule height1pt \kern2pt}%
  \def\@fs@mid{\kern2pt\hrule height1pt\kern2pt}%
  \def\@fs@post{\kern2pt\hrule height1pt}%
  \let\@fs@iftopcapt\iftrue
}
\makeatother

\title{Noncooperative Game in Multi-controller System under Delayed and Asymmetric Information}

\author{Xin~Li,
        Qingyuan~Qi,
        Kemi~Ding
\thanks{Corresponding author: Kemi Ding.}
\thanks{
X. Li and K. Ding are with the School of Automation and Intelligent Manufacturing, Southern University of Science and Technology, Shenzhen, 518055, China (email: 12431354@mail.sustech.edu.cn, dingkm@sustech.edu.cn).}
\thanks{
Q. Qi is with the Qingdao Innovation and
Development Center, Harbin Engineering University, Qingdao, 266000, China (email: qingyuan.qi@hrbeu.edu.cn).}}

\begin{document}

\maketitle

\begin{abstract}
% \textcolor{blue}{
We address a noncooperative game problem in multi-controller system under delayed and asymmetric information structure.
Under these conditions, the classical separation principle fails as estimation and control design become strongly coupled, complicating the derivation of an explicit Nash equilibrium. To resolve this, we employ a common–private information decomposition approach, effectively decoupling control inputs and state estimation to obtain a closed-form Nash equilibrium. By applying a forward iterative method, we establish the convergence of the coupled Riccati and estimation error covariance recursions, yielding both the steady-state Kalman filter and the Nash equilibrium. Furthermore, we quantify the impact of asymmetric information, proving that a richer information set reduces the costs for the corresponding player. Finally, numerical examples are provided to demonstrate the effectiveness of the results.

\end{abstract}

% In this paper, we investigate the optimal control problem for a discrete-time system controlled by two controllers. In our formulation, two controllers can access a one-step noisy measurement from the two sensors, respectively. 
% Considering the unidirectional nature of information interaction, we assume that one controller can fuse measurement and action information received from another controller, 
% whereas the other controller has access only to its own information. 
%  We formulate this problem as a Noncooperative dynamic game under an asymmetric information structure, where each controller minimizes its own objective function. By confining the strategy space to linear policies, we derive the optimal control strategies by decomposing the global estimation information into common and private components. The Nash equilibrium is analytically characterized by a set of coupled Riccati equations, which admit a decoupled solution via backward recursion. Furthermore, through theoretical derivation and simulation verification, an important conclusion is obtained that more information contributes to cost reduction for the players in the game. The theoretical results are verified by numerical simulations.

% \textcolor{red}{
% Considering the sensors’ limited storage capacity, we assume that one sensor can fuse measurements received from another sensor, whereas the other sensor has access only to its own local measurements.}

\begin{IEEEkeywords}
% \textcolor{red}{
Cyber-physical systems, noncooperative game, multi-controller, delayed and asymmetric information structure, common-private information decomposition.
\end{IEEEkeywords}

\section{Introduction}
\label{sec:introduction}
Cyber-physical systems (CPSs) integrate computation, communication, and control for real-time interaction between physical processes and digital intelligence \cite{Baheti2011}. 
In many applications, CPSs consist of multiple distributed controllers connected through communication networks \cite{Lee2008}, where communication and sensing constraints may lead to delayed and asymmetric information, posing significant challenges for multi-controller systems.

\subsection{Related works}

Existing literature on multi-controller CPSs is largely restricted to collaborative scenarios, where information is shared to optimize a common performance criterion \cite{Yan2020, Murray2007}. These cooperative control frameworks are typically implemented through either centralized architectures, relying on a global coordinator \cite{Tsikalakis2011}, or distributed paradigms based on local interactions \cite{Sinopoli2003, Bamieh2002, DAndrea2003}.

However, in many practical control systems, multiple controllers often pursue distinct, potentially conflicting objectives rather than a common goal. Representative examples include modern power grids, where utilities and consumers strategically adjust generation and consumption to maximize their respective economic benefits \cite{Sheha2020, Wu2023}, as well as security-critical cyber-physical systems, where attackers and defenders interact in a fundamentally noncooperative manner \cite{Pirani2021}. In this context, noncooperative dynamic game theory provides a rigorous mathematical framework to model and analyze the strategic interactions among these self-interested decision-makers.
% A fundamental attribute governing such dynamic games is the information structure. Traditionally, a substantial body of literature has been developed under the assumption of a symmetric information structure, where all players share identical observation histories. Early studies primarily concentrated on the two-player setting \cite{Vorobeychik2004, Hefti2017, Tanimoto2007}, while subsequent efforts have successfully generalized the equilibrium analysis to multiplayer scenarios \cite{Bilo2021, Lv2018, Park2023}. Despite their theoretical elegance, symmetric information models often fall short in capturing the physical realities of modern decentralized systems, where sensors and communication channels are inherently distributed.
Within this game-theoretic framework, the information structure is a fundamental element. Most existing studies have focused on symmetric information structures, under which all players have access to identical observation histories. Early works mainly considered two-player games \cite{Vorobeychik2004, Hefti2017, Tanimoto2007}, and more recent work has generalized the analysis to the multiplayer
setting\cite{Bilo2021, Lv2018, Park2023}.  
Although the symmetric-information assumption is appropriate for some systems with fully shared observations, it may not be suitable for many decentralized CPSs, where controllers usually have access to different information due to distributed sensing and communication. Since asymmetric information among controllers commonly arises in practical scenarios, noncooperative games with asymmetric information have also received considerable attention.
% Driven by the prevalence of information asymmetry in networked systems,
% Motivated by the limitations of symmetric information models, stochastic games with asymmetric information have also been extensively studied.
However, the presence of asymmetric information among the controllers makes the explicit characterization of Nash equilibria significantly more challenging. To address this issue, Nayyar et al.~\cite{Nayyar2013} exploited the common information shared among the controllers to transform the original game into an equivalent symmetric-information game, from which the corresponding Markov perfect equilibria were obtained.
Ouyang et al. \cite{Ouyang2016} investigated dynamic games with asymmetric information under the PBE/CIB-PBE framework and proposed a sequential decomposition method based on public information. In subsequent work, this framework was further extended to dynamic games with hidden actions \cite{Vasal2016}.
It is stressed that such information asymmetry is equally prevalent in cooperative decentralized control.
In practical networked environments, communication and sensing delays may prevent controllers from accessing real-time measurements \cite{Vatanski2009}. 
Consequently, recent studies have considered cooperative decentralized control problems involving both one-step delays and asymmetric information \cite{Liang2023, nayyar2016optimal}.

Nevertheless, existing studies on dynamic games with asymmetric information provide primary theoretical frameworks, leaving the closed-form solutions for Nash equilibria largely unaddressed. Meanwhile, research on delayed information systems has been largely confined to cooperative control settings, with noncooperative strategic interactions remaining an open problem.
% To bridge this gap, this paper investigates a two-player noncooperative game featuring one-step delayed noisy measurements and an asymmetric information structure, and provides a closed-form Nash equilibrium. This formulation better reflects the realities of networked systems, where controllers may lack timely access to sensor measurements due to communication and processing delays, and the different abilities of different controllers naturally induce an asymmetric information structure among them. 
To bridge this gap, this paper investigates a two-player/controller noncooperative game featuring one-step delayed noisy measurements and asymmetric information. This formulation more accurately captures the operational characteristics of networked systems, in which communication and processing latencies may hinder timely access to sensor measurements, and the different capabilities of different controllers naturally induce an asymmetric information structure. Therefore, the article is of both theoretical and practical significance.

\subsection{Contribution}
In this paper, we consider a linear time-invariant system jointly regulated by two controllers under a delayed and asymmetric information structure. Specifically, controller 1 has access only to one-step delayed measurements, based on which a regularization objective is optimized self-interest. On the other hand, controller 2 possesses a richer information set, including its own measurements as well as the historical observations and control inputs of controller 1 transmitted through the communication network. We formulate this as a two-player noncooperative dynamic game under delayed and asymmetric information structure. 
Note that the considered information structure results in the classical separation principle failing, and the state estimation and control design are intrinsically coupled, which brings difficulties in obtaining the closed-form Nash equilibrium. To address these challenges, we employ a common-private information decomposition approach. Our method allows state estimation and control inputs to be effectively decoupled, thereby deriving the closed-form Nash equilibrium. 

% Note that the considered information structure results in the classical separation principle failing, and the state estimation and control design are intrinsically coupled, which brings difficulties in obtaining the closed-form Nash equilibrium. To address these challenges, we employ a common-private information decomposition approach. Our method allows state estimation and control inputs to be effectively decoupled, thereby deriving the closed-form Nash equilibrium. Due to
% the space limitation, the detailed proof of this article is provided in \cite{}.
% In addition, a forward iterative analysis of the coupled Riccati equations confirms the convergence to steady-state Nash equilibrium.

Compared with previous works, the main contributions of this article are summarized as follows.

% \textcolor{blue}{
(1)~A two-player noncooperative dynamic game with delayed and asymmetric information is formulated. Compared with existing game-theoretic models, the proposed formulation simultaneously accounts for delayed measurements and asymmetric information, thereby extending the analysis of noncooperative dynamic games to a nonclassical information structure.
% }

% \textcolor{blue}{
(2)~A common-private information decomposition is developed to handle the considered information structure. Based on this decomposition and a best-response analysis, a closed-form Nash equilibrium within the class of linear state-feedback strategies is derived and characterized by a set of coupled Riccati equations. Moreover, under standard stabilizability and detectability conditions, the corresponding steady-state Nash equilibrium is established.
% }

% \textcolor{blue}{
(3)~The impact of information on the cost is analyzed. By combining the monotonicity of the estimation error covariance with a backward iteration analysis of the Riccati matrices, we prove that richer information helps reduce the cost of the corresponding player. Numerical results verify the theoretical analysis.
% }

% \textcolor{red}{
The remainder of this article is organized as follows. In Section \ref{sec:model}, we formulate the problem of the considered noncooperative game. In Section \ref{mr}, we first employ a common-private information decomposition approach to find the Nash equilibrium and optimal cost functions, and analyze the relationship between cost and information structure. Furthermore, the convergence of the Nash equilibrium is provided. In Section \ref{ne}, a numerical example is provided to verify the theoretical results. Section \ref{cc} concludes the paper.

\textbf{Notation.} A n-dimensional Euclidean space is represented by $\mathbb{R}^n$, the set of $m \times n$ real matrices is denoted by $\mathbb{R}^{m \times n}$.
\(I_n\) is the identity matrix of dimension \(n\). For a matrix $A$, its transpose is $A^\top$, and its induced 2-norm by $\|A\|$. Given a square matrix $B$, $B \succ 0$ ($B \succeq 0$) means that $B$ is positive definite (positive semidefinite), and $\text{Tr}(B)$ is the trace of matrix $B$. For matrices $A$ and $B$, $\operatorname{diag}(A, B)$ denotes a (block) diagonal matrix with matrices $A, B$ on the diagonal. The $\sigma$-algebra generated by the random variable/vector $X$ is represented by $\sigma(X)$. $\mathbb{E}[\cdot]$ and $\operatorname{Cov}(\cdot)$ denote the mathematical expectation and covariance, respectively.
% Moreover, $\mathbb{E}(x)$ is the mathematical expectation of the random variable $x$. 
$\mathbb{E}[\text{•}|\mathcal{F}_k]$ means that the conditional expectation with respect to $\mathcal{F}_k$. A finite sequence (or set) $\{y_0, y_1, y_2, \cdots, y_N\}$ is denoted by $\{y_k\}_{k=0}^N$. 
% \textcolor{blue}{
Throughout this paper, the superscripts 1 and 2 are used to distinguish the two controllers, sensors, estimators, and devices.

\section{Problem Formulation}\label{sec:model}
\subsection{System Model}
% \begin{figure}[htbp]
%     \centering
%     % 也可以使用 \linewidth，它在单栏中等同于 \textwidth
%   \includegraphics[width=1.0\linewidth]{figures/ffggg.pdf}
%     \caption{System model under the one-step delayed measurements and asymmetric information structure}
%     \label{fig1}
% \end{figure}
\begin{figure}[H]
    \centering
    % 也可以使用 \linewidth，它在单栏中等同于 \textwidth
  \includegraphics[width=\linewidth]{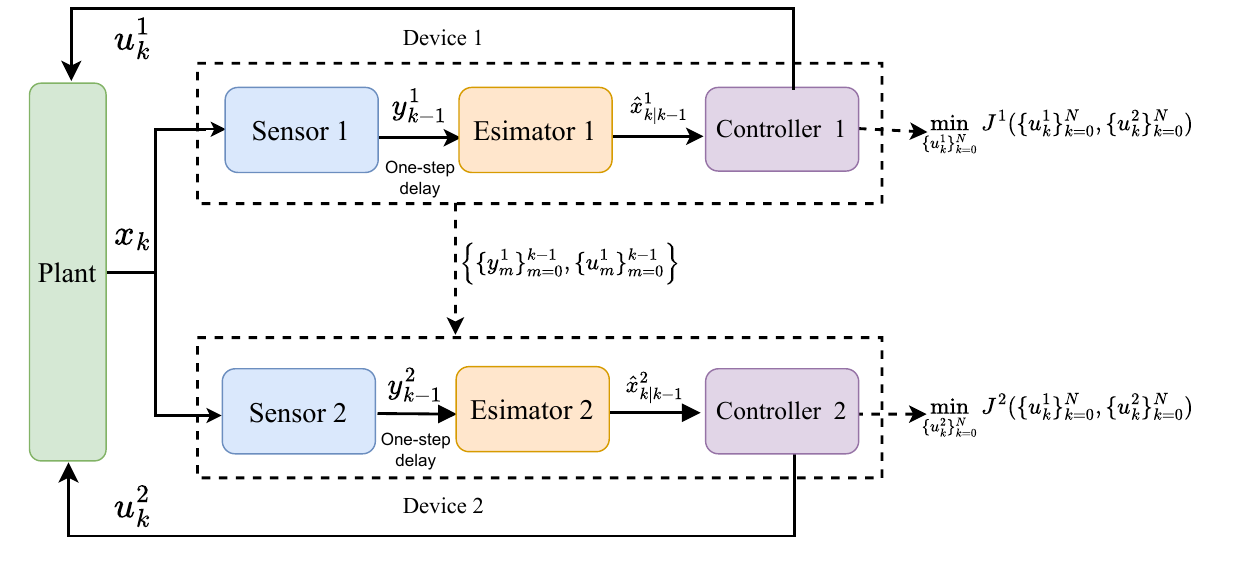}
    \caption{System model under the delayed and asymmetric information structure}
    \label{fig1}
\end{figure}

% \textcolor{blue}{
As shown in Figure~\ref{fig1}, we consider a linear time-invariant (LTI) system driven by two devices (i.e., Device 1 and Device 2), each embedded with a sensor, an estimator, and a controller. The sensor monitors the plant, the estimator processes the observations, and the controller computes and generates the control signals. 
The two devices operate under an asymmetric information structure induced by a unidirectional information flow from Device~1 to Device~2.
Specifically, Device~2 can access past observations and control inputs of Device 1, whereas Device~1 observes only its own local information. Considering communication delays between sensors and estimators, both estimators receive one-step delayed sensor measurements.
Consequently, each controller optimizes its own objective based on one-step delayed state estimation. A practical example of this setting will be introduced later in Example 1.
% }

In this paper, the dynamics of the discrete-time LTI system is given by
\begin{equation}\label{s1}
\left\{
\begin{aligned}
x_{k+1} &= Ax_k + B_1u_k^1 + B_2u_k^2 + w_k,\\
y_k^1 &= C_1x_k + v_k^1,\\
y_k^2 &= C_2x_k + v_k^2,
\end{aligned}
\right.
\end{equation}
where $x_k \in \mathbb{R}^n$ is the system state, the control inputs are denoted by $u_k^1 \in \mathbb{R}^{m_1}$, $u_k^2 \in \mathbb{R}^{m_2}$, respectively. The measurements of sensors 1 and 2 are given by $y_k^1 \in \mathbb{R}^{p_1}$ and $y_k^2 \in \mathbb{R}^{p_2}$, respectively. The system matrix is represented by $A \in \mathbb{R}^{n \times n}$ , the control input matrices are denoted by $B_1 \in \mathbb{R}^{m_1 \times n}, B_2\in \mathbb{R}^{m_2 \times n}$, and $C_1 \in \mathbb{R}^{p_1 \times n}$, $C_2 \in \mathbb{R}^{p_2 \times n}$ are the measurement matrices. 
Here, $\omega_k\in\mathbb{R}^n$ denotes the system noise, while $v_k^{1}\in\mathbb{R}^{p_1}$ and $v_k^{2}\in\mathbb{R}^{p_2}$ denote the measurement noises of the two sensors, respectively.
The initial state $x_0$ and the noises $w_k$, $v_k^{1}$, and $v_k^{2}$ are mutually independent Gaussian random variables with means $\mu$, $0$, $0$, and $0$, and covariances $\sigma$, $Q_w$, $Q_{v^{1}}$, and $Q_{v^{2}}$, respectively.

Consider the finite-horizon problem over $k \in \{0, 1, \dots, N\}$, each controller acts as a regulator for system~\eqref{s1} and selects its control input to minimize its own cost function, given by
\begin{small}
\begin{equation}\label{cs}
\begin{aligned}
&J^i(\{u_k^1\}_{k=0}^N, \{u_k^2\}_{k=0}^N)=  \mathbb{E}\Big[\sum_{k=0}^{N} \Big( x_k^\top Q_i x_k + (u_k^1)^\top S_i u_k^1\\
&+(u_k^2)^\top R_i u_k^2 \Big)\Big]+ x_{N+1}^{\top}P_{N+1}^ix_{N+1}, \quad i=1, 2,
\end{aligned}
\end{equation}
\end{small}
% \textcolor{blue}{
where $Q_i$, $S_i$, $R_i$, and $P_{N+1}^i, i=1, 2$ are positive semi-definite weight matrices, such that the cost functions of controllers are coupled with each other through their control input and the state of the system. This article focuses on designing controllers for each subsystem to minimize their individual cost functions in \eqref{cs}.
% }

\begin{example}
In practice, the power network adopts a two-level hierarchical control architecture \cite{Scattolini2009}, consisting of an upper-level coordinator and multiple lower-level model predictive control (MPC) regulators. The coordinator generates reference trajectories and boundary constraints based on the global objectives, while the local MPC regulators perform closed-loop control to optimize dynamic response and tracking performance. The upper level sends reference values to the lower level, but the lower level does not pass feedback information to the upper level. This architecture is characterized by a unidirectional information flow, which induces an asymmetric information structure. Consequently, this problem can be formulated as an optimization problem with distinct objectives under an asymmetric information structure.
\end{example}
% \textcolor{blue}{The explanation of this example is not clear to show the connection with your setting. Any other existing work consider such unidirectional information flow between two devices?}
% \end{example}
\vspace{-1.5em}
\subsection{Asymmetric Information Structure}
Recall that, as shown in Figure \ref{fig1}, controller~2 has access to its own one-step delayed signal $\{y_k^2\}_{m=0}^{k-1}$, $\{u_k^2\}_{m=0}^{k-1}$, as well as the one-step delayed signal $\{y_k^1\}_{m=0}^{k-1}$ and $\{u_k^1\}_{m=0}^{k-1}$ of controller~1. On the other hand, controller~1 receives only its own local information $\{y_k^1\}_{m=0}^{k-1}$, $\{u_k^1\}_{m=0}^{k-1}$. 
% The main challenge for solving Problem 1 \textcolor{blue}{(where is problem 1??)} is the asymmetry of the information structure, making the design of optimal control strategies difficult.
Accordingly, the information sets available to controllers~1 and~2 can be characterized as follows.
\begin{align}\label{infor1}
\mathcal{F}_k^1&=\sigma\Big\{ \{y_m^1\}_{m=0}^{k-1}, \{u_m^1\}_{m=0}^{k-1}\Big\},
\notag\\
\mathcal{F}_k^2&=\sigma\Big\{ \{y_m^1\}_{m=0}^{k-1}, \{u_m^1\}_{m=0}^{k-1}, \{y_m^2\}_{m=0}^{k-1}, \{u_m^2\}_{m=0}^{k-1}\Big\},
\end{align}
where $u_k^1$ is $\mathcal{F}_k^1$--measurable and $u_k^2$ is $\mathcal{F}_k^2$--measurable, respectively. Obviously, we have $\mathcal{F}_k^1 \subset \mathcal{F}_k^2$.

\subsection{The Game-theoretic Formulation}
The objective of this work is to solve the coupled optimization problem described in \eqref{jj2} within a networked environment, which induces one-step delayed measurements and an asymmetric information structure between the two devices. Here, historical information can only be transmitted from Device 1 to Device 2. It is worth noting that the two controllers have distinct objectives and interact with each other, which can be modeled as a noncooperative game under delayed and asymmetric information structure.

\textbf{Noncooperative dynamic game.}
The noncooperative game under asymmetric information induced by \eqref{s1}--\eqref{infor1} is defined as the tuple
\(
\mathcal G \triangleq \langle \mathcal I,\ \mathcal S,\ \{\mathcal F_k^i\}_{i\in\mathcal I},\ \{\mathcal U_i\}_{i\in\mathcal I},\ \{J^i\}_{i\in\mathcal I} \rangle,
\)
where:
\begin{itemize}
\item \textbf{Players:} $\mathcal I \triangleq \{\text{Controller~1},\,\text{Controller~2}\}$. It is assumed that Controllers 1 and 2 are rational players; that is, each selects an action that best serves its own objective from its available action set.
\item \textbf{State space:} $\mathcal S \triangleq \mathbb R^n$, with state $x_k\in\mathcal S$ evolving according to \eqref{s1}.
\item \textbf{Information structure:} At time $k$, player $i \in \mathcal{I}$ chooses $u_k^i$ based on its available information set $\mathcal F_{k}^i$, respectively.
\item \textbf{Action space:}
To characterize the set of admissible \cite{T1998} action for player $i$ over the finite horizon, we define the admissible action set as
\begin{small}
\begin{align*}
\mathcal{U}_i \triangleq \Big\{ \{u_k^i\}_{k=0}^{N} \mid \ & u_k^i \in \mathbb{R}^{m_i}, u_k^{i} \text{ is } \mathcal{F}_k^i\text{-measurable}, \\
& \sum_{k=0}^N \mathbb{E} \left[ \|u_k^i\|^2\right] < +\infty \Big\}.
\end{align*}
\end{small}
It is worth noting that $\mathcal{U}_i$ is non-empty, closed and convex subset of $\mathbb{R}^{m_i}$, and $\{u_{k}^i\}_{k=0}^{N} \in \mathcal{U}_i$ is called admissible action sequence.
The strategy at time $k$ is a mapping $
\mu_k^i:\mathcal F_k^i \to \mathcal U_i$,
and action $u_k^i=\mu_k^i(\mathcal {F}_k^i)$.
A sequence $\pi^i=\{\mu_k^i\}_{k=0}^N$ is called an admissible policy for player $i$.

\item \textbf{Costs:} 
For each player \(i \in \{1,2\}\), given the joint action sequences
\(\{u_k^1\}_{k=0}^N \in \mathcal{U}_1\) and \(\{u_k^2\}_{k=0}^N \in \mathcal{U}_2\),
the finite-horizon quadratic cost \(J^i\) is given by \eqref{cs}.
% For player $i=1, 2$, given an action sequence $\{u_{k}^i\}_{k=0}^{N} \in \mathcal{U}_i$, the finite-horizon quadratic cost $J^i$ follows \eqref{cs}.
\end{itemize}

Moreover, Nash equilibrium is treated as the common solution of the noncooperative game, and its definition for game $\mathcal{G}$ is given as follows.

\begin{definition}\label{def1}
A pair \((\{u_k^{1,*}\}_{k=0}^N,\{u_k^{2,*}\}_{k=0}^N)\) is called a Nash equilibrium of game \(\mathcal G\) if, for any admissible action sequences
\(\{u_k^{1}\}_{k=0}^N \in \mathcal U_1\) and
\(\{u_k^{2}\}_{k=0}^N \in \mathcal U_2\), the following inequalities hold:
\begin{align}
J^1\bigl(\{u_k^{1,*}\}_{k=0}^N,\{u_k^{2,*}\}_{k=0}^N\bigr)
&\le
J^1\bigl(\{u_k^{1}\}_{k=0}^N,\{u_k^{2,*}\}_{k=0}^N\bigr), \notag\\
J^2\bigl(\{u_k^{1,*}\}_{k=0}^N,\{u_k^{2,*}\}_{k=0}^N\bigr)
&\le
J^2\bigl(\{u_k^{1,*}\}_{k=0}^N,\{u_k^{2}\}_{k=0}^N\bigr).
\end{align}
\end{definition}

We formulate the problem of finding the Nash equilibrium of the noncooperative dynamic game as follows.
\begin{problem}\label{prob1}
% Consider the noncooperative dynamic game \(\mathcal G\). 
Under the asymmetric information structure \eqref{infor1}, the problem is to determine a Nash equilibrium of the game \(\mathcal G\). For each player \(i\in\{1,2\}\), given the strategy of the other player, the goal is to find an admissible control sequence \(\{u_k^i\}_{k=0}^N\) that minimizes its own cost functional. Accordingly, player \(i\) solves
\begin{small}
\begin{equation}\label{eq:opt_i}
\begin{aligned}
\min_{\{u_k^{i}\}_{k=0}^N}\quad & J^i\big(\{u_k^{1}\}_{k=0}^N,\{u_k^{2}\}_{k=0}^N\big)\\
\text{s.t.}\quad
&u_k^{i}\in \mathcal U_i,\quad \forall k \in \{0, \dots, N\}.
\end{aligned}
\end{equation}
\end{small}
\end{problem}

\begin{remark}
% \textcolor{blue}{
To the best of our knowledge, Problem \ref{prob1} has not been previously addressed in the literature. Existing works have focused on complete history sharing among controllers \cite {Ouyang2016, Nayyar2013, Vasal2016}, while this article addresses a more general noncooperative game under one-step delayed measurement and asymmetric information. The core challenge stems from the nonclassical information pattern, which induces a coupling between state estimation and controller design, thereby complicating the derivation of a closed-form Nash equilibrium.
\end{remark}

\section {Main Results}\label{mr}
% \textcolor{blue}{
This section is devoted to the explicit characterization of the Nash equilibrium under the one-step delayed measurement and the asymmetric information structure. Then we establish the monotonicity of the optimal cost of the corresponding player with respect to the information structure. Furthermore, both the steady-state Kalman filtering and the Nash equilibrium are obtained via a forward iterative scheme.

\subsection{Common-private Information Decomposition}

To overcome the challenge caused by asymmetric information structure, we exploit the nested information pattern defined in \eqref{infor1}, and employ a common-private information decomposition approach to analyze the output of estimators and control inputs $u^1_k$, $u^2_k$ as follows.

First, let \(\hat x_{k|k-1}^1=\mathbb E[x_k\mid \mathcal F_k^1]\) and \(\hat x_{k|k-1}^2=\mathbb E[x_k\mid \mathcal F_k^2]\) denote the one-step state estimate available to controllers~1 and~2, respectively. Since \(\mathcal F_k^1 \subset \mathcal F_k^2\), controller~2 has access to a richer information set than controller~1. Then, the state estimate \(\hat x_{k|k-1}^2\) can be decomposed into a common part and a private part, where the common part is \(\hat{x}_{k|k-1}^1\) and the private part is defined as \(
d_k \triangleq \hat{x}_{k|k-1}^2-\hat{x}_{k|k-1}^1.\) By the orthogonality principle of conditional expectation \cite{Durrett2019}, it holds that \(\mathbb{E}\![d_k \mid \mathcal F_k^1]=0\) and \(\mathbb{E}\!\left[\hat{x}_{k|k-1}^{\,1} d_k^\top\right]=0\).
Moreover, we define $e_k^1 \triangleq x_k-\hat{x}_{k|k-1}^1$ and $e_k^1 \triangleq x_k--\hat{x}_{k|k-1}^2$, then we have $e_k^1 = e_k^2 + d_k$. By using the orthogonality principle of conditional expectation, we further have the following conclusions induced by the common-private information decomposition.
\begin{lemma}\label{lem1}
For all $k \in \{0, \dots, N\}$ and $e_k^2=x_k-\hat{x}_{k|k-1}^{\,2}$, the following orthogonality relations hold:
\begin{small}
\begin{equation}
\mathbb{E}\!\left[\hat{x}_{k|k-1}^{\,1} (e_k^2)^\top\right]=0,\quad
\mathbb{E}\!\left[d_k (e_k^2)^\top\right]=0.
\end{equation}
\end{small}
\end{lemma}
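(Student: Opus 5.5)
The plan is to derive both identities from the orthogonality property of the conditional expectation $\hat x_{k|k-1}^2=\mathbb E[x_k\mid\mathcal F_k^2]$, together with the nesting $\mathcal F_k^1\subset\mathcal F_k^2$ stated after \eqref{infor1}. The basic fact I will use is the following. For any $\mathcal F_k^2$-measurable random vector $Z$ with finite second moment,
\begin{equation*}
\mathbb E\big[Z (e_k^2)^\top\big]=\mathbb E\big[Z\,\mathbb E[(x_k-\hat x_{k|k-1}^2)^\top\mid\mathcal F_k^2]\big]=0 .
\end{equation*}
This follows from the tower property, since $\mathbb E[x_k-\hat x_{k|k-1}^2\mid\mathcal F_k^2]=0$. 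For it to apply, $x_k$ must be square integrable, and so must every $\mathcal F_k^2$-measurable quantity involved.

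The first step is to check these integrability conditions. The initial state and the noises are Gaussian with finite covariances. The admissible inputs satisfy $\sum_k\mathbb E\|u_k^i\|^2<\infty$ by the definition of $\mathcal U_i$. Hence, inductively through \eqref{s1}, each $x_k$ has a finite second moment. Conditional expectation is an $L^2$ contraction, so $\hat x_{k|k-1}^1$ and $\hat x_{k|k-1}^2$ are also square integrable.

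The second step is the first identity. Since $\hat x_{k|k-1}^1$ is $\mathcal F_k^1$-measurable and $\mathcal F_k^1\subset\mathcal F_k^2$, it is also $\mathcal F_k^2$-measurable. Taking $Z=\hat x_{k|k-1}^1$ in the fact above gives $\mathbb E[\hat x_{k|k-1}^1(e_k^2)^\top]=0$.

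The third step is the second identity. The private part $d_k=\hat x_{k|k-1}^2-\hat x_{k|k-1}^1$ is a difference of two $\mathcal F_k^2$-measurable vectors, so it is itself $\mathcal F_k^2$-measurable. Taking $Z=d_k$ gives $\mathbb E[d_k(e_k^2)^\top]=0$. Equivalently, one can use $\mathbb E[\hat x_{k|k-1}^2(e_k^2)^\top]=0$ and subtract the first identity.

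The only delicate point is measurability. The information sets in \eqref{infor1} include the past inputs, and under general admissible policies these inputs are themselves functions of earlier data. Nothing here requires Gaussianity or linearity of the strategies: the argument uses only that $\mathcal F_k^1\subset\mathcal F_k^2$, which holds by construction, and that all the vectors involved are square integrable. Once the integrability in the first step is established, the remaining steps are routine.
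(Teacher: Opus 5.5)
Your proposal is correct and takes essentially the same route as the paper. The paper gives no separate proof of this lemma and simply invokes the orthogonality principle of conditional expectation: $\hat x_{k|k-1}^1$ and $d_k$ are $\mathcal F_k^2$-measurable because $\mathcal F_k^1\subset\mathcal F_k^2$, and $\mathbb E[e_k^2\mid\mathcal F_k^2]=0$; it writes out this same argument in conditional form in the proof of Lemma~\ref{lem2}. Your square-integrability check via the admissible set $\mathcal U_i$ is a detail the paper leaves implicit, and you handle it correctly.
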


Next, based on the measurability of $u^2_k$ with respect to \(\mathcal F_k^1\), the following definitions are given:
\begin{small}
\begin{equation}\label{f1}
\hat{u}_k^2 \triangleq \mathbb{E}\!\left[u_k^2 \mid \mathcal{F}_{k}^1\right], 
\qquad 
\tilde{u}_k^2 \triangleq u_k^2-\hat{u}_k^2.
\end{equation}
\end{small}
By computation, $\hat{u}_k^2$ and  $\tilde{u}_k^2$ satisfy
\begin{small}
\begin{equation}\label{f2}
\begin{aligned}
&\mathbb{E}[\tilde{u}_k^2]=0, \qquad
\mathbb{E}\!\left[\tilde{u}_k^2 \mid \mathcal{F}_{k}^1\right]=0,\\
&\mathbb{E}\!\left[\hat{u}_k^2 \mid \mathcal{F}_{k}^2\right]=\hat{u}_k^2, \qquad
\mathbb{E}\!\left[\tilde{u}_k^2 \mid \mathcal{F}_{k}^2\right]=\tilde{u}_k^2.
\end{aligned}
\end{equation}
\end{small}
As a consequence, $u_k^2$ is decomposed into $\hat{u}_k^2$ and $\tilde{u}_k^2$. Note that $u_k^1$ and $\hat{u}_k^2$ have same measurability respect to $\mathcal F_k^1$, then we can define
$\hat{u}_k=\begin{bmatrix}
u_k^1 \\
\hat{u}_k^2
\end{bmatrix}$.
% \textcolor{red}{
Furthermore, we assume that the players employ strategies in the following linear state-feedback form:
\begin{small}
\begin{align}\label{u2}
\hat{u}_k&=K_k^1\,\hat{x}_{k|k-1}^1, \quad \tilde{u}_k^2=K_k^2\big(\hat{x}_{k|k-1}^2-\hat{x}_{k|k-1}^1\big),
% u_k^2&=K_k^1\,\hat{x}_{k|k-1}^1+K_k^2\big(\hat{x}_{k|k-1}^2-\hat{x}_{k|k-1}^1\big)
\end{align}
\end{small}
where \(K_k^1\) and \(K_k^2\) denote the state-feedback gains to be determined.

In addition, we define \(B \triangleq [\,B_1,\; B_2\,]\), \(
\Gamma_1 \triangleq 
\begin{bmatrix}
S_1 & 0\\
0 & R_1
\end{bmatrix}\), and \(\Gamma_2 \triangleq 
\begin{bmatrix}
S_2 & 0\\
0 & R_2
\end{bmatrix}
\). Since \(S_1\), \(S_2\), \(R_1\) and \(R_2\) defined in \eqref{cs} are all positive semidefinite, hence the block-diagonal matrices \(\Gamma_1, \Gamma_2\) are both positive semidefinite. Consequently, on the basis of \eqref{f1}-\eqref{f2}, the system dynamics \eqref{s1} can be reformulated into an equivalent form
\begin{small}
\begin{align}\label{rs1}
x_{k+1} = Ax_k + B\hat{u}_k + B_2\tilde{u}_k^2+w_k.
\end{align}
\end{small}
For $i=1, 2$, the associated cost functions \eqref{cs} can equivalently be represented as
\begin{small}
\begin{equation}\label{jj2}
\begin{aligned}
&J^i(\{\hat{u}_k\}_{k=0}^N,\{\tilde{u}_k^{2}\}_{k=0}^N) = \mathbb{E}\left[ x_{N+1}^{\top} P_{N+1}^i x_{N+1} \right]\\
&\qquad+ \sum_{k=0}^N \mathbb{E}\Big[ x_k^{\top} Q_i x_k + \hat{u}_k^{\top} \Gamma_i \hat{u}_k + (\tilde{u}_k^2)^{\top} R_{i} \tilde{u}_k^2\Big].
\end{aligned}
\end{equation}
\end{small}

\begin{remark}
The proposed common-private decomposition approach helps address the coupling difficulty under one-step delayed measurement and asymmetric information. Through the simultaneous decomposition of state estimates and control inputs, the linear strategy form can be established, which facilitates the analytical derivation of the optimal state estimation and Nash equilibrium.
\end{remark}
\vspace{-1em}

\subsection{The state estimate via Kalman filter}
% In this subsection, we investigate the optimal state estimations for the estimators 1 and 2 with different information by employing the results of common-private information.
In this subsection, we derive the optimal state estimates for estimators 1 and 2 under different information structures.

In line with standard assumptions in linear estimation and control \cite{Kalman1960, Kalman1963, Chen1984}, we impose the following assumption on system~\eqref{s1}.
% Following standard assumptions in linear estimation and control \cite{Kalman1960, Kalman1963, Chen1984}, we impose the following standing assumption on system~\eqref{s1}.

\begin{assumption}\label{ass1}
For \(i=1, 2\), the pair \((A, B_i)\) is controllable and the pair \((A, C_i)\) is observable.
\end{assumption}
Since estimator 2 has access to the observations 
$\{y_m^1\}_{m=0}^{k-1}$ and $\{y_m^2\}_{m=0}^{k-1}$, 
its measurement equation at time $k$ can be written as
\begin{small}
\begin{equation}\label{yy1}
    y_k = Cx_k + v_k,
\end{equation}
\end{small}
where $y_k=\begin{bmatrix}
y_k^{1} \\
y_k^{2}
\end{bmatrix}$, 
$v_k=
\begin{bmatrix} 
v_k^1\\
v_k^2
\end{bmatrix}$, 
and $C=
\begin{bmatrix}
C_{1}\\
C_{2}
\end{bmatrix}$.

In addition to Assumption~\ref{ass1}, we assume that $(A, C)$ is observable. 
Then, by standard Kalman filtering \cite{Anderson2005}, the optimal state estimation for the two estimators is characterized in the following lemma.

% Under the linear strategy class, we adopt $\hat{u}_k = K^1 \hat{x}_{k|k-1}^1$ and
% $\tilde{u}_k^2 = K^2\big(\hat{x}_{k|k-1}^2-\hat{x}_{k|k-1}^1\big)$.

\begin{lemma}\label{kf_lem}
Based on \eqref{f1}-\eqref{rs1}, the estimates of estimators 1 and 2 are represented as
% Using \eqref{u2}, \eqref{rs1} and \eqref{yy1}, the estimates of estimators 1 and 2 are represented as

\begin{equation}\label{kf}
\left\{
\begin{aligned}
\hat{x}_{k|k}^1&=\hat{x}_{k|k-1}^1+G_{k|k-1}^1(y_k^1-C_1\hat{x}_{k|k-1}^1),\\
\hat{x}_{k|k-1}^1&=A\hat{x}_{k-1|k-1}^1+B\hat{u}_{k-1},\\
\hat{x}_{k|k}^2&=\hat{x}_{k|k-1}^2+G_{k|k-1}^2(y_k-C\hat{x}_{k|k-1}^2),\\
\hat{x}_{k|k-1}^2&=A\hat{x}_{k-1|k-1}^2+B\hat{u}_{k-1}+B_2\tilde{u}_{k-1}^2,
\end{aligned}
\right.
\end{equation}
where the estimation error covariance matrices of estimators 1 and 2 are given by
\begin{small}
\begin{equation}\label{kf1}
\begin{aligned}
&\Sigma_{k|k}^1\hspace{-1mm}=\hspace{-1mm}(I\hspace{-1mm}-\hspace{-1mm}G_{k|k-1}^{1}C_1)\Sigma_{k|k-1}^1(I\hspace{-1mm}-\hspace{-1mm}G_{k|k-1}^{1}C_1)^{\top}\hspace{-1mm}+\hspace{-1mm}G_{k|k-1}^{1}Q_{v_1}(G_{k|k-1}^{1})^{\top}\hspace{-1mm},\\
&G_{k|k-1}^{1}\hspace{-1mm}= \hspace{-1mm}\Sigma_{k|k-1}^{1} C_1^{\top}\big(C_1 \Sigma_{k|k-1}^{1} C_1^{\top}\hspace{-1mm}+\hspace{-1mm}Q_{v_1}\big)^{-1},\\
&\Sigma_{k+1|k}^1
=(A(I-G_{k|k-1}^{1}C_1)\hspace{-0.5mm}+\hspace{-0.5mm}B_2K_k^2)\,(\Sigma_{k|k-1}^1\hspace{-0.5mm}-\hspace{-0.5mm}\Sigma_{k|k-1}^2)\,\\
&\times(A(I-G_{k|k-1}^{1}C_1)\hspace{-0.5mm}+\hspace{-0.5mm}B_2K_k^2)^{\top} \hspace{-0.5mm}+\hspace{-0.5mm}A(I-G_{k|k-1}^{1}C_1)\,\\
&\times\Sigma_{k|k-1}^2\,(I\hspace{-1mm}-\hspace{-1mm}G_{k|k-1}^{1}C_1)^{\top}A^{\top}
\hspace{-1mm}+\hspace{-1mm}A G_{k|k-1}^{1} Q_{v_1} (G_{k|k-1}^{1})^{\top}A^{\top}\hspace{-1mm}+\hspace{-1mm}Q_w,
\end{aligned}
\end{equation}
\end{small}
\begin{small}
\begin{equation}\label{kf2}
\begin{aligned}
&\Sigma_{k|k}^2\hspace{-1mm}=\hspace{-1mm}(I\hspace{-1mm}-\hspace{-1mm}G_{k|k-1}^2C)\Sigma_{k|k-1}^2(I\hspace{-1mm}-\hspace{-1mm}G_{k|k-1}^2C)^{\top}\hspace{-1mm}+\hspace{-1mm}G_{k|k-1}^2Q_v\left(G^{2}_{k|k-1}\right)^{\top},\\
&G_{k|k-1}^2 = \Sigma_{k|k-1}^2 C^{\top} (C \Sigma_{k|k-1}^2C^{\top}+Q_{v})^{-1},\\
&\Sigma_{k+1|k}^2=A\Sigma_{k|k}^2A^{\top}+Q_w,
\end{aligned}
\end{equation}
\end{small}
with initial value $x_{0|-1}^1=x_{0|-1}^2=\mu$, and $\Sigma_{0|-1}^1=\Sigma_{0|-1}^2=\sigma$. 
% and $\Sigma_{0|-1}^1=\sigma_1$, $\Sigma_{0|-1}^2=\sigma_2$.
\end{lemma}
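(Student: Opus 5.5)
The plan is to use the joint Gaussianity created by the linear strategies \eqref{u2}. I would first show by induction on $k$ that $x_k$, $y_k^1$, $y_k^2$, $\hat x^1_{k|k-1}$, $\hat x^2_{k|k-1}$, $\hat u_k$ and $\tilde u^2_k$ are all affine functions of the Gaussian primitives $x_0,\{w_m\},\{v_m^1\},\{v_m^2\}$. Conditional expectations given $\mathcal F_k^1$ or $\mathcal F_k^2$ are then linear in the observed data, and the two estimators can be handled separately with standard Kalman filter arguments \cite{Anderson2005}.

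\emph{Estimator 2.} Both $u^1_{k-1}$ and $u^2_{k-1}$, equivalently $\hat u_{k-1}$ and $\tilde u^2_{k-1}$, are $\mathcal F_k^2$-measurable. For estimator 2 the system \eqref{rs1} with stacked output \eqref{yy1} is therefore a linear Gaussian system with known inputs. The measurement update adds $y_{k}$ to the information, which gives the usual gain $G^2_{k|k-1}$ and the Joseph-form covariance $\Sigma^2_{k|k}$. The time update is $\hat x^2_{k|k-1}=A\hat x^2_{k-1|k-1}+B\hat u_{k-1}+B_2\tilde u^2_{k-1}$. Since $w_{k-1}$ is independent of the past, the error is $e^2_{k}=A(x_{k-1}-\hat x^2_{k-1|k-1})+w_{k-1}$, so $\Sigma^2_{k|k-1}=A\Sigma^2_{k-1|k-1}A^\top+Q_w$. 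This yields \eqref{kf2}. The key observation is that the control inputs cancel in $e^2_k$, so $\Sigma^2$ does not depend on the gains.

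\emph{Estimator 1.} The measurement update with $y^1_k$ is again standard: $\hat x^1_{k|k}$ and $G^1_{k|k-1}$ follow from the innovation $y^1_k-C_1\hat x^1_{k|k-1}$, and $\Sigma^1_{k|k}$ takes the Joseph form. For the prediction, $\hat u_k$ is $\mathcal F^1_k$-measurable, and $\tilde u^2_k$ has zero conditional mean given $\mathcal F^1_k$ by \eqref{f2}. Treating $B_2\tilde u^2_k$ as an unobserved zero-mean input then gives $\hat x^1_{k+1|k}=A\hat x^1_{k|k}+B\hat u_k$. The error decomposes as follows. First, $x_k-\hat x^1_{k|k}=(I-G^1_{k|k-1}C_1)e^1_k-G^1_{k|k-1}v^1_k$. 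Then, using $e^1_k=e^2_k+d_k$ and $\tilde u^2_k=K^2_kd_k$,
\[
e^1_{k+1}=\big(A(I-G^1_{k|k-1}C_1)+B_2K^2_k\big)d_k+A(I-G^1_{k|k-1}C_1)e^2_k-AG^1_{k|k-1}v^1_k+w_k .
\]
I would then compute the covariance term by term. The cross term between $d_k$ and $e^2_k$ vanishes by Lemma~\ref{lem1}. The terms involving $v^1_k$ and $w_k$ vanish because these noises are independent of $\mathcal F^2_k$ and of $x_k$. From $e^1_k=e^2_k+d_k$ and Lemma~\ref{lem1} we get $\operatorname{Cov}(d_k)=\Sigma^1_{k|k-1}-\Sigma^2_{k|k-1}$. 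Substituting these facts gives exactly the last line of \eqref{kf1}. At $k=0$ both information sets are trivial, so the initial conditions are $\mu$ and $\sigma$.

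\emph{Main obstacle.} The delicate step is justifying that the term $B_2\tilde u^2_k$ can be dropped from estimator 1's predictor. The fact $\mathbb E[\tilde u^2_k\mid\mathcal F^1_k]=0$ is immediate, but $\mathcal F^1_{k+1}$ also contains $y^1_k$, and $y^1_k$ may be correlated with $d_k$. To handle this, I would first check whether this correlation vanishes, using the identity $\mathbb E[d_k(y^1_k-C_1\hat x^1_{k|k-1})^\top]=\operatorname{Cov}(d_k)C_1^\top$. If it does not, I would present \eqref{kf} as the estimator that estimator 1 is constrained to use. This is the linear predictor built from its own model, which treats the unknown private input of controller 2 as zero-mean. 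Under that reading, \eqref{kf1} holds as the exact error covariance of this estimator, and the covariance computation above remains valid regardless of optimality.
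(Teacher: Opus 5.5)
Your derivation of \eqref{kf2} and of the last line of \eqref{kf1} is the paper's own argument. The paper also expands $e^1_{k+1}=A(I-G^1_{k|k-1}C_1)e^1_k-AG^1_{k|k-1}v^1_k+B_2K^2_kd_k+w_k$ and uses Lemma~\ref{lem1} to get $\mathbb E[e^1_kd_k^\top]=\Sigma^1_{k|k-1}-\Sigma^2_{k|k-1}$. Substituting $e^1_k=e^2_k+d_k$ first, as you do, is the same computation written more cleanly.

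For the estimator-1 time update, however, the paper only says the estimates are ``directly computed via the Kalman filter''. The obstacle you flagged is a real error, not just a delicate step, so you cannot leave your check open: it fails. Set $\nu_k\triangleq y^1_k-C_1\hat x^1_{k|k-1}$ and $\Omega_k\triangleq C_1\Sigma^1_{k|k-1}C_1^\top+Q_{v_1}$. Your identity gives $\mathbb E[d_k\nu_k^\top]=\Delta\Sigma_{k|k-1}C_1^\top$. This is zero at $k=0$, because $d_0=0$, but not in general afterwards. For example, with scalar $C_1=C_2=1$ and $A\neq0$, you get $\Delta\Sigma_{1|0}=A^2(\Sigma^1_{0|0}-\Sigma^2_{0|0})>0$.

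Take $\hat x^1_{k|k-1}=\mathbb E[x_k\mid\mathcal F^1_k]$, so that $\mathbb E[d_k\mid\mathcal F^1_k]=0$. Conditioning on $\mathcal F^1_{k+1}=\mathcal F^1_k\vee\sigma(\nu_k)$ then yields
\begin{align*}
\mathbb E[x_{k+1}\mid\mathcal F^1_{k+1}]
&=A\hat x^1_{k|k}+B\hat u_k\\
&\quad+B_2K^2_k\Delta\Sigma_{k|k-1}C_1^\top\Omega_k^{-1}\nu_k .
\end{align*}
Correspondingly, the true conditional covariance equals the right-hand side of the last line of \eqref{kf1} minus $B_2K^2_k\Delta\Sigma_{k|k-1}C_1^\top\Omega_k^{-1}C_1\Delta\Sigma_{k|k-1}(K^2_k)^\top B_2^\top$.

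Hence, with $\hat x^1$ defined as a conditional mean (as the paper does), the predictor in \eqref{kf} and the covariance recursion in \eqref{kf1} are already wrong for $\hat x^1_{2|1}$ and $\Sigma^1_{2|1}$ whenever $B_2K^2_1\Delta\Sigma_{1|0}C_1^\top\neq0$. This already happens in the paper's own numerical example, where $C_1\Delta\Sigma_{1|0}C_1^\top>0$. Neither the paper's proof nor your main line can establish the lemma in that sense. Your fallback reading is the only provable one.

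So commit to the fallback, and make it self-consistent. Once $\hat x^1_{k|k-1}$ is defined by the recursion, $\mathbb E[d_k\mid\mathcal F^1_k]=0$ and \eqref{f2} fail from $k=2$ on. Then $[0,\,I]K^1_k\hat x^1_{k|k-1}$ is no longer $\mathbb E[u^2_k\mid\mathcal F^1_k]$. You must read \eqref{f1} as a mere split of $u^2_k$ into an $\mathcal F^1_k$-measurable part and a remainder. You must also drop $B_2\tilde u^2_k$ from the predictor by definition, not by citing \eqref{f2}.

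Your covariance computation survives, because it uses only the following facts:
\begin{itemize}
\item $\mathbb E[d_k(e^2_k)^\top]=0$. This still holds: $\hat x^2_{k|k-1}$ remains the exact $\mathcal F^2_k$-conditional mean, since its input $B_1u^1_{k-1}+B_2u^2_{k-1}$ is $\mathcal F^2_k$-measurable however $u^2$ is split, and $d_k$ is $\mathcal F^2_k$-measurable.
\item $\mathbb E[d_k]=\mathbb E[e^1_k]=0$. This follows by induction from $\mathbb E[e^1_{k+1}]=\big(A(I-G^1_{k|k-1}C_1)+B_2K^2_k\big)\mathbb E[e^1_k]$.
\item $v^1_k$ and $w_k$ are independent of $x_k$ and $\mathcal F^2_k$.
\item The Joseph form, which is valid for any gain.
\end{itemize}
With these, \eqref{kf}--\eqref{kf2} hold exactly, but $\Sigma^1_{k|k-1}$ is then the error covariance of a suboptimal filter, not a conditional covariance. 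Any later use of $\hat x^1_{k|k-1}$ as player~1's conditional mean needs separate justification.
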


\begin{proof}
Using \eqref{f1}, \eqref{f2}, and \eqref{rs1}, the optimal estimates of estimators 1 and 2 can be directly computed via the Kalman filter. Next, the estimation error covariance of estimator 1 satisfies
\begin{small}
\begin{equation}\label{eec1}
\begin{aligned}
\Sigma_{k+1|k}^1
\hspace{-0.5mm}=\hspace{-0.5mm}\Big[(A(I\hspace{-1mm}-\hspace{-1mm}G_{k|k-1}^{1}C_1)(x_k\hspace{-1mm}-\hspace{-1mm}\hat{x}_{k|k-1}^1)\hspace{-1mm}-\hspace{-1mm}AG_{k|k-1}^{1}v_k^1\hspace{-1mm}+\hspace{-1mm}B_2\tilde{u}_k^2\hspace{-1mm}+\hspace{-1mm}w_k\Big]\\
\times \Big[(A(I\hspace{-1mm}-\hspace{-1mm}G_{k|k-1}^{1}C_1)(x_k-\hat{x}_{k|k-1}^1)\hspace{-1mm}-\hspace{-1mm}AG_{k|k-1}^{1}v_k^1\hspace{-1mm}+\hspace{-1mm}B_2\tilde{u}_k^2+w_k\Big]^{\top}, 
\end{aligned}
\end{equation}
\end{small}
and substituting the linear state-feedback strategies \eqref{u2} into \eqref{eec1}, then
\begin{small}
\begin{equation}\label{eec2}
\begin{aligned}
\Sigma_{k+1|k}^1
&\hspace{-0.5mm}=\hspace{-0.5mm}\Big[(A(I\hspace{-1mm}-\hspace{-1mm}G_{k|k-1}^{1}C_1)(x_k\hspace{-1mm}-\hspace{-1mm}\hat{x}_{k|k-1}^1)\hspace{-1mm}-\hspace{-1mm}AG_{k|k-1}^{1}v_k^1\hspace{-1mm}\\
&+\hspace{-1mm}B_2(\hat{x}_{k|k-1}^2-\hat{x}_{k|k-1}^1)^{\top})\hspace{-1mm}+\hspace{-1mm}w_k\Big]\\
&\times \Big[(A(I\hspace{-1mm}-\hspace{-1mm}G_{k|k-1}^{1}C_1)(x_k-\hat{x}_{k|k-1}^1)\hspace{-1mm}\\
&-\hspace{-1mm}AG_{k|k-1}^{1}v_k^1\hspace{-1mm}+\hspace{-1mm}B_2(\hat{x}_{k|k-1}^2-\hat{x}_{k|k-1}^1)^{\top})+w_k\Big]^{\top}.
\end{aligned}
\end{equation}
\end{small}
% Using the orthogonality conclusion given in Lemma \ref{lem1}, we have
By utilizing the orthogonality conclusion in Lemma \ref{lem1}, we have
\begin{equation}\label{eec2}
\begin{aligned}
&\mathbb{E}[(x_k-\hat{x}_{k|k-1}^1)(\hat{x}_{k|k-1}^2-\hat{x}_{k|k-1}^1)^{\top})]\\
&=\mathbb{E}\{(x_k-\hat{x}_{k|k-1}^1)[(x_k-\hat{x}_{k|k-1}^1)-(x_k-\hat{x}_{k|k-1}^2)]\}\\
&=\Sigma_{k|k-1}^1-\mathbb{E}\{[(x_k-\hat{x}_{k|k-1}^2)+\hat{x}_{k|k-1}^2](x_k-\hat{x}_{k|k-1}^2)^{\top}\}\\
&=\Sigma_{k|k-1}^1-\Sigma_{k|k-1}^2,
\end{aligned}
\end{equation}
therefore, \eqref{eec1} becomes 
\begin{small}
\begin{equation}\label{eec3}
\begin{aligned}
&\Sigma_{k+1|k}^1=(A(I-G_{k|k-1}^{1}C_1)\hspace{-0.5mm}+\hspace{-0.5mm}B_2K_k^2)\,(\Sigma_{k|k-1}^1\hspace{-0.5mm}-\hspace{-0.5mm}\Sigma_{k|k-1}^2)\,\\
&\times(A(I-G_{k|k-1}^{1}C_1)\hspace{-0.5mm}+\hspace{-0.5mm}B_2K_k^2)^{\top} \hspace{-0.5mm}+\hspace{-0.5mm}A(I-G_{k|k-1}^{1}C_1)\,\\
&\times\Sigma_{k|k-1}^2\,(I\hspace{-1mm}-\hspace{-1mm}G_{k|k-1}^{1}C_1)^{\top}A^{\top}
\hspace{-1mm}+\hspace{-1mm}A G_{k|k-1}^{1} Q_{v_1} (G_{k|k-1}^{1})^{\top}A^{\top}\hspace{-1mm}+\hspace{-1mm}Q_w.
\end{aligned}
\end{equation}
\end{small}
In addition, by computation, the estimation error of estimator 2 does not involve any control coupling terms.
Hence, we directly obtain \(\Sigma_{k+1|k}^2\), \(\Sigma_{k|k}^2\).
Therefore, \eqref{kf1} is obtained.
\end{proof}

%写成假设或
\subsection{The Nash equilibrium}
This subsection is dedicated to deriving a closed-form Nash equilibrium under the linear strategy assumption.
In view of Definition~\ref{def1}, finding a Nash equilibrium of game $\mathcal G$ corresponds to solving an individual minimization problem for each player with the strategy of the other player fixed, i.e., best-response analysis.

% Based on Definition~\ref{def1} and the optimal state estimates given in Lemma~\ref{kf_lem}, 
% the Nash equilibrium is obtained via a best-response analysis. 
% Specifically, the optimal action of each player is derived by fixing the opponent^{\top}s strategy at a candidate admissible action.
% The optimal linear strategies, i.e., Nash equilibrium is then characterized by a set of Riccati equations.
% corresponding optimal response; repeating this for both players yields a pair of coupled Riccati recursions. A Nash equilibrium is then obtained as a fixed point of these best responses.
% \medskip
Using Lemmas~\ref{lem1} and~\ref{kf_lem}, we first formulate the following optimization problem for $\hat{u}_k$.

\textbf{(P1): Minimization of $\hat{u}_k$ given $\tilde{u}_k^{2,*}$.}
For all $k \in \{0, \dots, N\}$, we assume that the optimal linear state-feedback strategy
\(
\tilde u_k^{2,*}
=
K_k^2\big(\hat x_{k|k-1}^2-\hat x_{k|k-1}^1\big) 
\) is fixed, where $K_k^2$ and $K_k^1$ are fixed, and the optimal state estimation $\hat x_{k|k-1}^1$ and $\hat x_{k|k-1}^2$ are defined in \eqref{kf}. 
Substituting $\tilde u_k^{2,*}$ into the system dynamics \eqref{rs1} yields
\begin{small}
\begin{equation}\label{eq:cl_step1}
x_{k+1}
=
Ax_k + B\hat u_k + B_2 K_k^2\big(\hat x_{k|k-1}^2-\hat x_{k|k-1}^1\big) + w_k.
\end{equation}
\end{small}
To minimize the cost function \(J^1(\{\hat{u}_k\}_{k=0}^N,\{\tilde{u}_k^{2,*}\}_{k=0}^N)\) in \eqref{rs1}, we introduce the following value function
\begin{small}
\begin{equation}\label{eq:v1_clean}
V_k^1
=
\mathbb{E}\!\left[
x_k^\top P_k^1 \hat x_{k|k-1}^1
+
x_k^\top \Phi_k^1\big(\hat x_{k|k-1}^2-\hat x_{k|k-1}^1\big)
\right],
\end{equation}
\end{small}
where the matrices \(P_k^1\) and \(\Phi_k^1\) satisfy the backward recursions
\begin{small}
\begin{equation}\label{eq:Ric1_clean}
\left\{
\begin{aligned}
&P_k^1 = (A + B K_k^1)^\top P_{k+1}^1 (A + B K_k^1) + Q_1 + (K_k^1)^\top \Gamma_1 K_k^1,\\
&\Phi_k^1 = (A + B_2 K_k^2)^\top \Phi_{k+1}^1 (A + B_2 K_k^2) + (K_k^2)^\top R_1 K_k^2,\\
&P_{N+1}^1=I,\quad \Phi_{N+1}^1=0.
\end{aligned}
\right.
\end{equation}
\end{small}
By employing \eqref{kf} and \eqref{eq:cl_step1}, we can calculate the one-step variation of the value function \(\Delta V_k^1=V_k^1-V_{k+1}^1\). Summing the resulting identities over \(k\) yields
\begin{align}\label{uu1}
\hat u_k^{*}=K_k^1\hat x_{k|k-1}^1,
\end{align}
where the feedback gain $K_k^1$ satisfies
\begin{small}
\begin{equation}\label{k1}
K_k^1 = -(H_k^1)^{-1} B^\top P_{k+1}^1 A,\quad 
H_k^1 = \Gamma_1 + B^\top P_{k+1}^1 B \succ 0.
\end{equation}
\end{small}

Similarly, we consider an optimization problem for $\tilde{u}_k^2$.
% \noindent

\textbf{(P2): Minimization of $\tilde{u}_k^2$ given $\hat{u}_k^*$.}
Suppose that the optimal linear strategy 
\(
\hat u_k^{*}=K_k^1\hat x_{k|k-1}^1,~ \forall k \in \{0, \dots, N\}
\) is adopted, where \(\hat x_{k|k-1}^1\) is given in \eqref{kf} and \(K_k^1\) is fixed.
Then, substituting it into \eqref{rs1}, the system dynamics can be rewritten as
\begin{small}
\begin{equation}\label{eq:cl_step2}
x_{k+1}
=
Ax_k + B K_k^1\hat x_{k|k-1}^1 + B_2 \tilde u_k^{2} + w_k.
\end{equation}
\end{small}
The associated value function is chosen as
\begin{small}
\begin{equation}\label{eq:v2_clean}
V_k^2
=
\mathbb{E}\!\left[
x_k^\top \Phi_k^2 \hat x_{k|k-1}^1
+
x_k^\top P_k^2\big(\hat x_{k|k-1}^2-\hat x_{k|k-1}^1\big)
\right],
\end{equation}
\end{small}
where the matrices \(P_k^2\) and \(\Phi_k^2\) satisfy
\begin{small}
\begin{equation}\label{eq:Ric2_clean}
\left\{
\begin{aligned}
&\Phi_k^2 = (A + B K_k^1)^\top \Phi_{k+1}^2 (A + B K_k^1) + (K_k^1)^\top \Gamma_2 K_k^1 + Q_2,\\
&P_k^2 = (A + B_2 K_k^2)^\top P_{k+1}^2 (A + B_2 K_k^2) + Q_2 + (K_k^2)^\top R_2 K_k^2,\\
&\Phi_{N+1}^2=I,\quad P_{N+1}^2=0.
\end{aligned}
\right.
\end{equation}
\end{small}
To minimize \(J^2(\{\hat{u}_k^*\}_{k=0}^N,\{\tilde{u}_k^{2}\}_{k=0}^N)\) in \eqref{rs1}, we compute a one-step variation \(\Delta V_k^2=V_k^2-V_{k+1}^2\), the optimal linear strategy is obtained as
\begin{align}\label{uu2}
\tilde u_k^{2,*}=K_k^2\big(\hat x_{k|k-1}^2-\hat x_{k|k-1}^1\big),
\end{align}
where the gain \(K_k^2\) satisfies
\begin{small}
\begin{equation}\label{kk2}
K_k^2
=
-(H_k^2)^{-1}B_2^\top P_{k+1}^2 A,
\quad
H_k^2 = R_2 + B_2^\top P_{k+1}^2 B_2 \succ 0 .
\end{equation}
\end{small}

Consequently, we can have the following result.
\begin{theorem}\label{thm1}
Consider the system dynamics \eqref{rs1} and the associated cost functions \eqref{jj2}. Suppose that the coupled Riccati equations \eqref{eq:Ric1_clean} and \eqref{eq:Ric2_clean} admit unique solutions. Then Problem~\ref{prob1} is uniquely solvable, and the game $\mathcal G$ admits a unique Nash equilibrium in linear state-feedback form. More specifically, the following statements hold.
\begin{enumerate} % 移除列表左侧缩进，使编号贴近左边界
\item The feedback gains $K_k^1$ and $K_k^2$
    are uniquely determined by \eqref{k1} and \eqref{kk2}, respectively, for all $ k \in \{0, \dots, N\}$.
\item For all $ k \in \{0, \dots, N\}$, the optimal linear strategies for players satisfy 
\begin{align}\label{thm_utilde}
\hat u_k^{*}&=K_k^1\hat x_{k|k-1}^1,\notag\\
\tilde u_k^{2,*}&=K_k^2\big(\hat x_{k|k-1}^2-\hat x_{k|k-1}^1\big),
\end{align}
and the induced action sequences $\{u_k^{1,*}\}_{k=0}^N$ and $\{u_k^{2,*}\}_{k=0}^N$ satisfy
    \begin{align}
        u_k^{1,*} &= [I_{m_1},\,0]\,\hat u_k^{*}, \notag\\
        u_k^{2,*} &= [0,\,I_{m_2}]\,\hat u_k^{*} + \tilde u_k^{2,*}. \label{thm_u1}
    \end{align}
Moreover, the pair $(\{u_k^{1,*}\}_{k=0}^N, \{u_k^{2,*}\}_{k=0}^N)$ constitutes the Nash equilibrium of game $\mathcal G$.
\item The optimal costs are given by
\begin{small}
\begin{equation*}
\begin{aligned} 
&J^1\big(\{u_k^{1,*}\}_{k=0}^N,\{u_k^{2,*}\}_{k=0}^N\big)\\
&\hspace{-1mm}=\hspace{-1mm} \mathbb{E}\left[ x_0^{\top} P_0^1 \hat{x}_{0|-1}^1 \hspace{-1mm}+\hspace{-1mm} x_0^{\top} \Phi_0^1 (\hat{x}_{0|-1}^2\hspace{-1mm} -\hspace{-1mm} \hat{x}_{0|-1}^1) \right]\\
&\hspace{-1mm}+\hspace{-1mm}\operatorname{Tr}[\Sigma_{N+1|N}^1 P_{N+1}^1] \hspace{-1mm}+\hspace{-1mm} \sum_{k=0}^N \operatorname{Tr} \Bigl[ \Sigma_{k|k-1}^1 ( A^{\top} P_{k+1}^1 A G_{k|k-1}^{1} C_1\\
&+ Q_1) \hspace{-1mm} + \hspace{-1mm}(\Sigma_{k|k-1}^1\hspace{-1mm} - \hspace{-1mm}\Sigma_{k|k-1}^2) \bigl[ (K_k^2)^{\top} B_2^{\top} P_{k+1}^1 A G_{k|k-1}^{1} C_1  \\
&\hspace{-1mm}-\hspace{-1mm}(A + B_2 K_k^2)^{\top} \Phi_{k+1}^1 A G_{k|k-1}^{1} C_1 \bigr] \\
& + \Sigma_{k|k-1}^2 ( A^{\top} \Phi_{k+1}^1 A G_{k|k-1}^2 C - A^{\top} \Phi_{k+1}^1 A G_{k|k-1}^{1} C_1 ) \Bigr], 
\end{aligned}
\end{equation*}
\end{small}
\begin{small}
\begin{equation}
\begin{aligned} 
&J^{2}(\{u_k^{1,*}\}_{k=0}^N,\{u_k^{2,*}\}_{k=0}^N)\\
&= \mathbb{E}[ x_0^{\top} \Phi_0^2 \hat{x}_{0|-1}^1 \hspace{-0.5mm}+\hspace{-0.5mm} x_0^{\top} P_0^2 (\hat{x}_{0|-1}^2\hspace{-0.5mm} -\hspace{-0.5mm} \hat{x}_{0|-1}^1)]\hspace{-1mm}\\
&+\hspace{-1.2mm}\operatorname{Tr}[\Sigma_{N+1|N}^1 \Phi_{N+1}^2]\hspace{-1.2mm}+\hspace{-1.2mm} \sum_{k=0}^N \operatorname{Tr}\Bigl[\Sigma_{k|k-1}^2(A^{\top}P_{k+1}^2AG_{k|k-1}^2C)\\
&+\Sigma_{k|k-1}^1 ( A^{\top} (\Phi_{k+1}^2-P_{k+1}^2) A G_{k|k-1}^{1} C_1+Q_2)\Bigl].\label{oos1}
\end{aligned}
\end{equation}
\end{small}
\end{enumerate}
\end{theorem}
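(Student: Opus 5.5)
I plan a best-response argument run separately for each player, with the other player's linear strategy held fixed. In each case I complete the square using the value functions \eqref{eq:v1_clean} and \eqref{eq:v2_clean}. Consistency of the two resulting feedback laws then gives the Nash equilibrium.

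\textbf{Step 1: the one-step variation for player 1.} Fix $\tilde u_k^{2,*}=K_k^2 d_k$ with $d_k=\hat x^2_{k|k-1}-\hat x^1_{k|k-1}$. I write $x_{k+1}$ from \eqref{eq:cl_step1} and the estimate updates from \eqref{kf}. The latter give
$$\hat x^1_{k+1|k}=A\hat x^1_{k|k-1}+AG^1_{k|k-1}(C_1e_k^1+v_k^1)+B\hat u_k,$$
and an analogous expression for $\hat x^2_{k+1|k}$ with $G^2_{k|k-1}$ and $C$.

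I substitute these into $V_{k+1}^1$ and take expectations. Lemma~\ref{lem1} removes the cross terms: $\hat x^1\perp d$, $\hat x^1\perp e^2$, $d\perp e^2$, and the noises are independent of the $\mathcal F_k^1$- and $\mathcal F_k^2$-measurable quantities. I also use $\mathbb E[x_k^\top M \hat x^1_{k|k-1}]=\mathbb E[(\hat x^1_{k|k-1})^\top M\hat x^1_{k|k-1}]$, with the analogous identity for $d_k$ from \eqref{eec2}.

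After this reduction, $V_k^1-V_{k+1}^1$ plus the stage cost becomes a quadratic form in $\hat u_k$ with Hessian $H_k^1=\Gamma_1+B^\top P_{k+1}^1B$. The remainder consists of terms involving $\Sigma^1_{k|k-1}$, $\Sigma^2_{k|k-1}$, $K_k^2$, $\Phi^1_{k+1}$ and $P^1_{k+1}$.

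\textbf{Step 2: completing the square for player 1.} Since $\hat u_k$ is $\mathcal F_k^1$-measurable, $\mathbb E[\hat u_k^\top B^\top P^1_{k+1}Ax_k]=\mathbb E[\hat u_k^\top B^\top P^1_{k+1}A\hat x^1_{k|k-1}]$. Completing the square gives
$$\mathbb E\big[(\hat u_k-K_k^1\hat x^1_{k|k-1})^\top H_k^1(\hat u_k-K_k^1\hat x^1_{k|k-1})\big].$$
The leftover quadratic coefficients reproduce exactly the recursions \eqref{eq:Ric1_clean}. Summing from $k=0$ to $N$ then writes $J^1$ as the initial term $V_0^1$, plus the terminal term $\operatorname{Tr}[\Sigma^1_{N+1|N}P^1_{N+1}]$, plus the trace terms, plus the nonnegative squares. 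Because $H_k^1\succ0$, the minimizer $\hat u_k^*=K_k^1\hat x^1_{k|k-1}$ exists, is unique, and yields the stated expression for $J^1$.

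\textbf{Step 3: player 2.} I repeat the argument with $\hat u_k^*$ fixed, using \eqref{eq:cl_step2} and $V_k^2$. The free variable is now $\tilde u_k^2$, which satisfies $\mathbb E[\tilde u^2_k\mid\mathcal F_k^1]=0$ by \eqref{f2}. Hence $\mathbb E[(\tilde u_k^2)^\top M\hat x^1_{k|k-1}]=0$, and $\mathbb E[(\tilde u_k^2)^\top M x_k]=\mathbb E[(\tilde u_k^2)^\top M d_k]$.

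The square therefore centres on $K_k^2 d_k$ with weight $H_k^2$, which gives \eqref{kk2} together with \eqref{eq:Ric2_clean} and \eqref{oos1}. Recovering $u_k^{1,*}$ and $u_k^{2,*}$ from $\hat u_k$ and $\tilde u_k^2$ is immediate via \eqref{thm_u1}.

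\textbf{Step 4: fixed point and uniqueness.} Each best response is the unique minimizer given the other player's gains. If the coupled recursions \eqref{eq:Ric1_clean}, \eqref{eq:Ric2_clean}, \eqref{k1}, \eqref{kk2} have a unique solution, the gain pair is a mutual best response, and no other linear gain pair can be one. This establishes the Nash property of Definition~\ref{def1} within the linear feedback class.

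\textbf{Main obstacle.} The main difficulty is Step 1: an error-free bookkeeping of the cross terms in $\mathbb E[x_{k+1}^\top P\hat x^1_{k+1|k}]$ and $\mathbb E[x_{k+1}^\top\Phi\,d_{k+1}]$. In particular, the innovation terms contribute the $AG^1C_1$ and $AG^2C$ traces. I would first expand $d_{k+1}$ as
$$d_{k+1}=(A+B_2K_k^2)d_k+A\big(G^2_{k|k-1}(Ce_k^2+v_k)-G^1_{k|k-1}(C_1e_k^1+v_k^1)\big),$$
and then apply Lemma~\ref{lem1} term by term. This should produce the specific trace combinations in $J^1$.

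A further caveat is that the value functions are bilinear, of the form $x^\top P\hat x$ rather than $\hat x^\top P\hat x$. I would reconcile this through $\mathbb E[x_k^\top P\hat x^1]=\mathbb E[(\hat x^1)^\top P\hat x^1]$. The point to check is that the $Q_1$ and $(K_k^1)^\top\Gamma_1K_k^1$ terms in \eqref{eq:Ric1_clean} absorb the discrepancy, with the remaining $\operatorname{Tr}[\Sigma^1 Q_1]$ term accounting for the rest.
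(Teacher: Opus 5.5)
Your outline matches the paper's own argument. Appendix~A fixes the opponent's linear law, completes the square with \eqref{eq:v1_clean} and \eqref{eq:v2_clean}, sums over $k$, and handles player~2 ``by a similar derivation''. However, Step~4 does not deliver Definition~\ref{def1}, and the paper's appendix, which argues the same way, does not close this gap either.

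In Step~2 you minimize $J^1$ over $\hat u_k=[u_k^1;\hat u_k^2]$. So player~1 is also choosing $\hat u_k^2=\mathbb E[u_k^2\mid\mathcal F_k^1]$, which is part of player~2's action. In Step~3 player~2 varies only $\tilde u_k^2$. The second inequality of Definition~\ref{def1} is therefore tested only against deviations that keep the common part of $u^2$ fixed. Deviations $u_k^2\mapsto u_k^2+\delta_k$ with $\delta_k$ $\mathcal F_k^1$-measurable are never compared in $J^2$, and the candidate common part was chosen with player~1's weights. (The first inequality is not the issue: minimizing over the larger set $(u^1,\hat u^2)$ dominates deviations in $u^1$ alone.)

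This is a real failure, not bookkeeping. For $N=0$ both information fields are trivial and $d_0=0$, so the candidate is $u_0^{2,*}=[0,I_{m_2}]K_0^1\mu$, which involves no data of player~2 at all. Take scalar $A=B_1=B_2=\mu=1$, $S_1=R_1=1$, $R_2=2$ and terminal weights $1$ in \eqref{cs}. The candidate gives $u_0^{1,*}=u_0^{2,*}=-1/3$. Player~2's best reply to $u_0^{1,*}=-1/3$ instead minimizes $2z^2+(2/3+z)^2$ and equals $-2/9$.

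A Nash argument has to split the decision variables by player. Player~1 optimizes $u^1$ alone (through $B_1,S_1$, with player~2's common feedback in the closed loop). Player~2 optimizes all of $u^2$, i.e.\ both $\hat u^2$ and $\tilde u^2$, against $J^2$. This leads to coupled equations different from \eqref{eq:Ric1_clean}--\eqref{eq:Ric2_clean}.

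Step~3 has a second gap, even for private deviations. Your own expansion of $d_{k+1}$ contains $-AG^1_{k|k-1}C_1e_k^1$ with $e_k^1=d_k+e_k^2$. Hence $d_{k+1}=A(I-G^1_{k|k-1}C_1)d_k+B_2\tilde u_k^2+\eta_k$ with $\mathbb E[\eta_k\mid\mathcal F_k^2]=0$; compare the matrix $A(I-G^1_{k|k-1}C_1)+B_2K_k^2$ propagating $d_k$ in \eqref{kf1} with $A+B_2K_k^2$ in \eqref{eq:Ric2_clean}. Also, $\hat x^1_{k+1|k}$ contains $AG^1_{k|k-1}C_1d_k$, so $\tilde u_k^2$ moves player~1's later estimates and hence later $u^{1,*}$. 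Completing the square in $\mathbb E[d_{k+1}^\top P_{k+1}^2d_{k+1}+(\tilde u_k^2)^\top R_2\tilde u_k^2]$ therefore centres at $-(H_k^2)^{-1}B_2^\top P_{k+1}^2A(I-G^1_{k|k-1}C_1)d_k$, not at $K_k^2d_k$ from \eqref{kk2}. Moreover, the weight on $d_{k+1}$ must also carry the $\Phi^2$-cost it induces through $AG^1_{k+1|k}C_1d_{k+1}$ in $\hat x^1_{k+2|k+1}$.

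Equivalently, the trace remainder you treat as constant is not constant for player~2. By \eqref{kf1}, $\Sigma^1_{k+1|k}$, and hence $G^1_{k+1|k}$ and $\mathbb E[d_{k+1}d_{k+1}^\top]=\Sigma^1_{k+1|k}-\Sigma^2_{k+1|k}$, depend on $K_k^2$. Freezing them discards exactly the estimation--control coupling.

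Finally, the reconciliation you flag at the end will not go through. With $\tilde u_k^2=K_k^2d_k$, the recursion \eqref{kf} gives $\mathbb E[(x_{k+1}-\hat x^1_{k+1|k})(\hat x^1_{k+1|k})^\top]=B_2K_k^2(\Sigma^1_{k|k-1}-\Sigma^2_{k|k-1})C_1^\top(G^1_{k|k-1})^\top A^\top$, which is generally nonzero. So $\hat x^1_{k+1|k}$ is not $\mathbb E[x_{k+1}\mid\mathcal F_{k+1}^1]$, and Lemma~\ref{lem1} cannot be used together with \eqref{kf}. In fact, the term $\operatorname{Tr}[(\Sigma^1_{k|k-1}-\Sigma^2_{k|k-1})(K_k^2)^\top B_2^\top P_{k+1}^1AG^1_{k|k-1}C_1]$ in the stated $J^1$ equals $\mathbb E[(x_{k+1}-\hat x^1_{k+1|k})^\top P_{k+1}^1\hat x^1_{k+1|k}]$. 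It appears only because of this non-orthogonality, and a consistent computation would not produce it.
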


\begin{proof}
Please refer to Appendix~A.
\end{proof}

\begin{remark}
% \textcolor{red}{
Under the linear strategies, the closed-form of the Nash equilibrium \eqref{thm_u1} and the associated optimal costs \eqref{oos1} are explicitly characterized through the best-response analysis. It follows that Problem \ref{prob1} admits a unique Nash equilibrium solution.
% }
\end{remark}

\subsection{Comparison of equilibrium costs under symmetric and asymmetric information}

In this subsection, we characterize the impact of information structure on the costs of players by comparing the total costs under different information structures.

 % together with the backward induction method.

% \textcolor{red}{
Specifically, we consider the symmetric information
and the asymmetric information case, respectively.
% }

1)~\textbf{Symmetric information structure:}
% Consider the 
Given the system \eqref{s1} and cost function \eqref{cs}, assume that the controllers have access to the same information, i.e., a symmetric information structure:
\begin{align}\label{sis1}
\mathcal{F}_k^1=\mathcal{F}_k^2=\sigma\Big\{ \{y_m^1\}_{m=0}^{k-1}, \{u_m^1\}_{m=0}^{k-1}, \{y_m^2\}_{m=0}^{k-1}, \{u_m^2\}_{m=0}^{k-1}\Big\}.
\end{align}
Therefore, both players use the same state estimate, and for notational convenience, we retain the notation \(\hat{x}_{k|k-1}^2\).

% the state estimation
% are $\hat{x}_{k|k-1}^2$
% Hence, $u_k^1$, $u_k^2$ are subject to $\mathcal{F}_k^1$-measurable and $\mathcal{F}_k^2$-measurable, respectively.
% Consider 
To solve Problem \ref{prob1}, the optimal actions for the two players are derived as 
\begin{align}\label{ssk1}
u_k^{1,*}=K_k^1\hat{x}_{k|k-1}^2,\quad u_k^{2,*}=K_k^2\hat{x}_{k|k-1}^2, \quad \forall k \in \{0, \dots, N\}.
\end{align}
where $\hat{x}_{k|k-1}^2$ is given in \eqref{kf}.
Then optimal cost functions are represented as
\begin{small}
\begin{align}\label{sj}
&\hat{J}^{1}(\{u_k^{1,*}\}_{k=0}^N,\{u_k^{2,*}\}_{k=0}^N)\notag\\
&=\mathbb{E}[ x_0^{\top} P_{0}^1 \hat{x}_{0|-1}^2] +\operatorname{Tr}\left[\Sigma_{N+1|N}^2 P_{N+1}^1\right] \notag\\
&\hspace{-1mm}+ \hspace{-1mm}\sum_{k=0}^N \operatorname{Tr}\left[ \Sigma_{k|k-1}^2 (A^{\top} P_{k+1}^1 A G_{k|k-1}^2 C \hspace{-1mm}+ \hspace{-1mm}Q_1) \right],
\end{align}
\end{small}
where \(\Sigma_{k|k-1}^2\), \(G_{k|k-1}^2\) are given in \eqref{kf1}.

% \hat{J}^{2,*} &= \mathbb{E}\left[ x_0^{\top} P_{0}^2 \hat{x}_{0|-1}^2 \right] 
%      + \sum_{k=0}^N \mathbb{E}\left[ \Sigma^2 (A^{\top} P_{k+1}^2 A G C + Q_2) \right]\notag\\
%      &+\operatorname{Tr}\left[\Sigma^2 P_{N+1}^2\right]. 

2)~\textbf{Asymmetric information structure:}
% Consider the game \(\mathcal G\) under an asymmetric information structure, where the information sets of the two players are defined in \eqref{infor1}. 
% Substituting the optimal control sequence \eqref{thm_u1}, we suppose that the feedback gains \(K_k^1\) and \(K_k^2\), together with the Riccati matrices \(P_k^1\) and \(\Phi_k^1\) in \eqref{ssk1}-\eqref{sj}, are identical to those associated with symmetric information structure. 
% According to Theorem~\ref{prob1}, the resulting difference in the optimal costs is given by
Recalling that the asymmetric information structure of the two players is defined in \eqref{infor1}. 
In this case, the Nash equilibrium and the optimal cost functions have been given in \eqref{thm_u1}-\eqref{oos1}.

% The Nash equilibrium and the optimal cost functions under asymmetric information structure are given in \eqref{u2}-\eqref{jj2}.
It is worth mentioning that the Riccati matrices in \eqref{sj} are approximately equal to those in \eqref{jj2}.
Therefore, from \eqref{jj2} and \eqref{sj}, the difference between the associated optimal costs is given by
\begin{small}
\begin{equation}\label{jj3}
\begin{aligned}
&J^{1}(\{u_k^{1,*}\}_{k=0}^N,\{u_k^{2,*}\}_{k=0}^N)-\hat{J}^{1}(\{u_k^{1,*}\}_{k=0}^N,\{u_k^{2,*}\}_{k=0}^N)\\
&=\mathbb{E} \left[ (\hat{x}_{0|-1}^2 - \hat{x}_{0|-1}^1)^\top \Phi_0^1 (\hat{x}_{0|-1}^2 - \hat{x}_{0|-1}^1) \right]+\sum_{k=0}^N \Delta_k,
\end{aligned}
\end{equation}
\end{small}
with 
\small
\begin{align}\label{delta}
\Delta_k&=Tr \Bigl[ \Sigma_{k|k-1}^1 ( A^{\top} P_{k+1}^1 A G_{k|k-1}^{1} C_1 + Q_1) \notag\\
&\quad+ \Sigma_{k|k-1}^2 ( A^{\top} \Phi_{k+1}^1 A G_{k|k-1}^{2} C - A^{\top} \Phi_{k+1}^1 A G_{k|k-1}^{1} C_1 ) \notag\\
&\quad- \Sigma_{k|k-1}^2 (A^{\top} P_{k+1}^1 A G_{k|k-1}^{2} C + Q_1)\Bigl].
\end{align}

For simplicity, we define $\Delta \Sigma_{k|k} \triangleq \Sigma_{k|k}^{1}-\Sigma_{k|k}^2$, and $\Delta \Sigma_{k|k-1} \triangleq \Sigma_{k|k-1}^{1}-\Sigma_{k|k-1}^2$. 

To determine the sign of \eqref{jj3}, we first establish in the following lemma a monotonicity relation between the error covariance matrices and the information structure.

\begin{lemma}\label{lem2}
For the optimal estimates $\hat{x}_{k|k-1}^1$ and $\hat{x}_{k|k-1}^2$ in Lemma~\ref{kf_lem}, the corresponding estimation error covariance matrices satisfy
\begin{align}
\Sigma_{k|k-1}^{1}\succeq \Sigma_{k|k-1}^{2}.
\end{align}
\end{lemma}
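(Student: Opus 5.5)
The natural route is the variational characterization of conditional expectation under the nested information $\mathcal F_k^1\subset\mathcal F_k^2$, rather than a recursion on the Riccati-type updates \eqref{kf1}--\eqref{kf2}. By definition, $\Sigma_{k|k-1}^i=\mathbb{E}[e_k^i(e_k^i)^\top]$ with $e_k^i=x_k-\hat x_{k|k-1}^i$. The decomposition $e_k^1=e_k^2+d_k$, with $d_k=\hat x_{k|k-1}^2-\hat x_{k|k-1}^1$, then reduces the claim to a Pythagorean identity.

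First, $d_k$ is $\mathcal F_k^2$-measurable, since both estimates are. Because $\mathbb{E}[e_k^2\mid\mathcal F_k^2]=0$, the tower property gives $\mathbb{E}[d_k(e_k^2)^\top]=\mathbb{E}\big[d_k\,\mathbb{E}[e_k^2\mid\mathcal F_k^2]^\top\big]=0$. This is exactly the second relation in Lemma~\ref{lem1}, which I will invoke directly. Expanding,
\begin{equation*}
\Sigma_{k|k-1}^1=\mathbb{E}[(e_k^2+d_k)(e_k^2+d_k)^\top]=\Sigma_{k|k-1}^2+\mathbb{E}[d_kd_k^\top],
\end{equation*}
and the cross terms vanish. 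Since $\mathbb{E}[d_kd_k^\top]\succeq0$, the inequality $\Sigma_{k|k-1}^1\succeq\Sigma_{k|k-1}^2$ follows for every $k$. As a by-product, $\Delta\Sigma_{k|k-1}=\mathrm{Cov}(d_k)$. This is consistent with \eqref{eec2} and will be useful later for the sign of \eqref{jj3}.

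The main point to be careful about is that the $\Sigma^i_{k|k-1}$ produced by the recursions in Lemma~\ref{kf_lem} really are the error covariances of the conditional-mean estimates under the given linear strategies \eqref{u2}. For estimator~2 this is standard, because the controls are $\mathcal F_k^2$-measurable and known, so the control terms cancel in $e_k^2$. For estimator~1 the term $B_2\tilde u_k^2$ is not $\mathcal F_k^1$-measurable. However, it is a linear function of $d_k$, and everything is jointly Gaussian under linear policies, so $\hat x^1_{k|k-1}$ is still the conditional mean. The identity $e_k^1=e_k^2+d_k$ holds pathwise, so the argument needs only the definition of $\Sigma^1_{k|k-1}$ as $\mathbb{E}[e_k^1(e_k^1)^\top]$, which is how \eqref{eec1} was derived.

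If a purely recursive proof is preferred instead, the plan would be induction on $k$. The base case is $\Sigma^1_{0|-1}=\Sigma^2_{0|-1}=\sigma$. The inductive step would compare \eqref{kf1} with \eqref{kf2} using the fact that the Kalman measurement update with the stacked measurement $C$ is a minimizer over gains, so $\Sigma^2_{k|k}\preceq(I-GC)\Sigma(I-GC)^\top+GQ_vG^\top$ for the particular gain $G=[G^1_{k|k-1},0]$. I regard this route as the harder one, because the extra term $B_2K_k^2\Delta\Sigma_{k|k-1}(\cdots)^\top$ must be handled, so I would present the orthogonality argument as the main proof.
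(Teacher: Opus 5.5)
Your argument is correct and is essentially the paper's proof: both split $e_k^1=e_k^2+d_k$, eliminate the cross terms because $d_k$ is $\mathcal F_k^2$-measurable and $\mathbb{E}[e_k^2\mid\mathcal F_k^2]=0$, and identify $\Delta\Sigma_{k|k-1}$ with the positive semidefinite second moment of $d_k$. The only difference is that you use unconditional expectations, while the paper conditions on $\mathcal F_k^1$ and must then argue that the conditional error covariances are deterministic; your version skips that step and needs only that $\hat x_{k|k-1}^1$ is $\mathcal F_k^2$-measurable, so your side claim that the recursion \eqref{kf} gives the exact $\mathcal F_k^1$-conditional mean is not needed (and joint Gaussianity alone does not justify it, since $d_k$ is correlated with the innovation $y_k^1-C_1\hat x_{k|k-1}^1$).
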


\begin{proof}
Since $\mathcal F_k^1 \subset \mathcal F_k^2$ and \(d_k=\hat{x}_{k|k-1}^2-\hat{x}_{k|k-1}^1\), using the tower property of conditional expectation \cite{Durrett2019}, we have
\begin{align}
\mathbb E[d_k\mid \mathcal F_k^1]
=
\mathbb E[\hat x_{k|k-1}^2-\hat x_{k|k-1}^1\mid \mathcal F_k^1]
=0,
\end{align}
Let
\(
e_k^2\triangleq x_k-\hat x_{k|k-1}^2\) and $d_k\triangleq \hat x_{k|k-1}^2-\hat x_{k|k-1}^1$, 
so that $e_k^1=e_k^2+d_k$.
By Lemma~\ref{lem1}, $\mathbb E[e_k^2\mid \mathcal F_k^2]=0$.
Since $d_k$ is $\mathcal F_k^2$-measurable, we have
\(
\mathbb E[e_k^2 d_k^\top\mid \mathcal F_k^2]=0,
\)
and hence, by the tower property,
\(
\mathbb E[e_k^2 d_k^\top\mid \mathcal F_k^1]=0\) and
\(\mathbb E[d_k(e_k^2)^\top\mid \mathcal F_k^1]=0.
\)
Therefore,
\begin{align}
\Sigma_{k|k-1}^1
&= \mathbb E[e_k^1(e_k^1)^\top\mid \mathcal F_k^1] \notag\\
&= \mathbb E[e_k^2(e_k^2)^\top\mid \mathcal F_k^1]
 + \mathbb E[d_k d_k^\top\mid \mathcal F_k^1].
\label{eq:cov_split}
\end{align}
Moreover,
\(
\mathbb E[e_k^2(e_k^2)^\top\mid \mathcal F_k^1]
=
\mathbb E[\Sigma_{k|k-1}^2\mid \mathcal F_k^1].
\)
Under the linear--Gaussian Kalman setting, $\Sigma_{k|k-1}^2$ is deterministic; thus
\(
\mathbb E[\Sigma_{k|k-1}^2\mid \mathcal F_k^1]=\Sigma_{k|k-1}^2.
\)
Substituting this into \eqref{eq:cov_split} yields
\begin{align}
\Delta \Sigma_{k|k-1}
&\triangleq \Sigma^{1}_{k|k-1}-\Sigma^{2}_{k|k-1}\notag\\
&= \mathbb{E}\!\left[d_k d_k^{\top}\mid \mathcal F_k^1\right]
= \mathrm{Cov}(d_k\mid \mathcal F_k^1)\succeq 0,
\end{align}
which proves that $\Sigma_{k|k-1}^1 \succeq \Sigma_{k|k-1}^2$.
\end{proof}

By leveraging the monotonicity of the error covariance matrices established above, we show below that the equilibrium cost of the corresponding player is monotonically non-increasing with the richness of the information set.

\begin{theorem}
\label{thm2}
Applying Lemma~\ref{lem2}, the equilibrium cost difference given in \eqref{jj3} between the symmetric and asymmetric information structures satisfies
\begin{small}
\begin{equation}\label{eq:gap_decomp_final}
\begin{aligned}
&J^{1}(\{u_k^{1,*}\}_{k=0}^N,\{u_k^{2,*}\}_{k=0}^N) - \hat J^{1}(\{u_k^{1,*}\}_{k=0}^N,\{u_k^{2,*}\}_{k=0}^N)\\
&= \mathbb{E} \left[ (\hat{x}_{0|-1}^2 - \hat{x}_{0|-1}^1)^\top \Phi_0^1 (\hat{x}_{0|-1}^2 - \hat{x}_{0|-1}^1) \right] \\
&\quad+ \sum_{k=0}^{N} \mathbb{E} \left[ \operatorname{Tr} \left( Q_1 (\Sigma_{k|k-1}^1\hspace{-0.5mm} -\hspace{-0.5mm} \Sigma_{k|k-1}^2) \right) \right]\hspace{-0.5mm}-\hspace{-0.5mm} \sum_{k=0}^{N} \mathbb{E} \left[ \operatorname{Tr} \left( \Delta P_{k+1}^1 \Delta_k^u \right) \right]\\
&\quad+ \sum_{k=0}^{N} \mathbb{E} \left[ \operatorname{Tr} \left( \Delta P_{k+1}^1 (\Sigma_{k+1|k}^1 - \Sigma_{k+1|k}^2) \right) \right] \\
& \quad+ \sum_{k=0}^{N} \mathbb{E} \left[ \operatorname{Tr} \left( \Delta P_k^1 (\Sigma_{k|k-1}^1 - \Sigma_{k|k-1}^2) \right) \right] \geq 0,
\end{aligned}
\end{equation}
\end{small}
where $\Delta P_k^1 \triangleq P_k^1 - \Phi_k^1$ and $
\Delta_k^{u} = B_{2} K_k^{2}\big(\Sigma_{k|k-1}^{2}-\Sigma_{k|k-1}^{1}\big)\big(K_k^{2}\big)^{\top} B_{2}^{\top}$.
% Furthermore, $J^{1,*} - \hat J^{1,*} \ge 0$, which implies $\hat J^{1,*} \le J^{1,*}$.
\end{theorem}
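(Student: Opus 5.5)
Starting from the gap expression \eqref{jj3}--\eqref{delta}, I will rewrite $\sum_k\Delta_k$ so that every term is either a quadratic form in the covariance difference $\Delta\Sigma_{k|k-1}=\Sigma^1_{k|k-1}-\Sigma^2_{k|k-1}$ or a telescoping boundary term. The first term of \eqref{eq:gap_decomp_final} is copied directly from \eqref{jj3}. Grouping the $Q_1$ pieces of $\Delta_k$ gives $\operatorname{Tr}(Q_1\Delta\Sigma_{k|k-1})$ immediately.

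The remaining pieces have the form $\operatorname{Tr}[\Sigma A^\top(\cdot)AG C]$. I will eliminate them with the Kalman recursions of Lemma~\ref{kf_lem}. For estimator 2, $A\Sigma^2_{k|k-1}C^\top (G^2_{k|k-1})^\top A^\top=A\Sigma^2_{k|k-1}A^\top+Q_w-\Sigma^2_{k+1|k}$. For estimator 1, \eqref{kf1} expresses $A(I-G^1C_1)\Sigma^1(\cdot)^\top$ through $\Sigma^1_{k+1|k}$ together with the extra control-induced term involving $B_2K_k^2\Delta\Sigma_{k|k-1}$. Substituting these identities turns $\operatorname{Tr}[\Sigma^i A^\top M AG^iC_i]$ into $\operatorname{Tr}[M(A\Sigma^iA^\top-\Sigma^i_{k+1|k})]$ plus correction terms. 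The corrections for player 1 produce exactly $\Delta^u_k=B_2K^2_k(\Sigma^2-\Sigma^1)(K^2_k)^\top B_2^\top$.

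Since $P^1$ multiplies some of these terms and $\Phi^1$ multiplies others, I will collect them so that only the difference $\Delta P^1_{k+1}=P^1_{k+1}-\Phi^1_{k+1}$ appears. I then use the Riccati recursions \eqref{eq:Ric1_clean} to replace $\operatorname{Tr}[A^\top\Delta P^1_{k+1}A\,\Delta\Sigma_{k|k-1}]$ by $\operatorname{Tr}[\Delta P^1_k\Delta\Sigma_{k|k-1}]$ plus terms involving $K^1$ and $K^2$. Reindexing the sum yields the two trace terms with $\Delta\Sigma_{k+1|k}$ and $\Delta\Sigma_{k|k-1}$ in \eqref{eq:gap_decomp_final}; this is an algebraic identity. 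The expectation signs are harmless, since all covariances are deterministic.

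\textbf{Sign argument.}
\begin{enumerate}
\item $\Phi^1_0\succeq0$, because its recursion is a congruence of a PSD matrix plus $(K^2)^\top R_1K^2\succeq0$. Hence the first term is nonnegative.
\item By Lemma~\ref{lem2}, $\Delta\Sigma_{k|k-1}\succeq0$ for every $k$, including $k+1$. So $\operatorname{Tr}(Q_1\Delta\Sigma)\ge0$, and $\Delta^u_k\preceq0$, which gives $-\operatorname{Tr}(\Delta P^1_{k+1}\Delta^u_k)\ge0$ provided $\Delta P^1_{k+1}\succeq0$.
\item Every remaining term is of the form $\operatorname{Tr}(\Delta P\cdot \text{PSD})$. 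These are also nonnegative once $\Delta P^1_k\succeq0$.
\end{enumerate}

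\textbf{Main obstacle: showing $\Delta P^1_k\succeq0$.} I will prove it by backward induction, in the spirit of the "backward iteration analysis" the paper mentions. The base case is $\Delta P^1_{N+1}=I-0\succ0$. For the inductive step I write
\[
P^1_k-\Phi^1_k=\bigl[(A+BK^1_k)^\top P^1_{k+1}(A+BK^1_k)-(A+B_2K^2_k)^\top P^1_{k+1}(A+B_2K^2_k)\bigr]+(A+B_2K^2_k)^\top\Delta P^1_{k+1}(A+B_2K^2_k)+Q_1+(K^1_k)^\top\Gamma_1K^1_k-(K^2_k)^\top R_1K^2_k.
\]
The middle term is PSD by the induction hypothesis. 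The bracket and the $K$-weighted terms are not obviously signed, however.

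My first attempt is to use the optimality of $K^1_k$: it minimizes $K\mapsto (A+BK)^\top P^1_{k+1}(A+BK)+K^\top\Gamma_1K$. Comparing against the admissible choice $K=[0;K^2_k]$ would control the difference by $(K^2)^\top R_1 K^2$ with the correct sign. The difficulty is that this comparison points in the direction that bounds $P^1$ from above, not from below. If the induction fails in general, I expect to need an extra comparability assumption, for example that the symmetric-information Riccati matrices dominate. This is consistent with the paper's "approximately equal" remark preceding \eqref{jj3}, and I would state it explicitly where it is needed.
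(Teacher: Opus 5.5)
Your plan follows the paper's route: rewrite $\sum_k\Delta_k$ through the filter recursions of Lemma~\ref{kf_lem} and the Riccati recursions, then show each term of \eqref{eq:gap_decomp_final} is nonnegative. Everything rests on $\Delta P^1_k\succeq0$, and you correctly flag that step and leave it open. As written, the proposal therefore does not prove the statement, and the missing step cannot be supplied because it is false in general. Complete the square with $H^1_k$ and compare $K^1_k$ with your admissible choice $\bar{K}^2_k\triangleq\begin{bmatrix}0\\ K^2_k\end{bmatrix}$. This gives the exact recursion
\[
\Delta P^1_k=(A+B_2K^2_k)^\top\Delta P^1_{k+1}(A+B_2K^2_k)+Q_1-(\bar{K}^2_k-K^1_k)^\top H^1_k(\bar{K}^2_k-K^1_k).
\]
The last term is player~1's one-step regret from using $\bar{K}^2_k$ instead of $K^1_k$. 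It is negative semidefinite and can be made arbitrarily large through $R_1$, which is exactly why your comparison points the wrong way.

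Concretely, take scalar data $A=B_1=B_2=C_1=C_2=1$, $Q_1=S_1=Q_2=R_2=1$, $R_1=100$; all standing assumptions hold. Then $K^2_N=0$, $P^2_N=1$ and $K^2_{N-1}=-1/2$, so $\Phi^1_N=0$ and $\Phi^1_{N-1}=R_1/4=25$. Inserting $K=\begin{bmatrix}-1\\ 0\end{bmatrix}$ into the minimization that defines $P^1_{N-1}$ gives $P^1_{N-1}\le Q_1+S_1=2$. Hence $\Delta P^1_{N-1}\le-23$, and the summand $\operatorname{Tr}(\Delta P^1_{N-1}\Delta\Sigma_{N-1|N-2})$ in \eqref{eq:gap_decomp_final} is negative as soon as $\Delta\Sigma_{N-1|N-2}\neq0$.

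The paper's own proof does not get around this. The recursion it writes for $\Delta P^1_k$ is $(A+BK^1_k)^\top\Delta P^1_{k+1}(A+BK^1_k)+Q_1+(K^1_k)^\top\Gamma_1K^1_k+(A+BK^1_k)^\top\Phi^1_{k+1}(A+BK^1_k)$, and this is just $P^1_k$. The subtracted term $\Phi^1_k=(A+B_2K^2_k)^\top\Phi^1_{k+1}(A+B_2K^2_k)+(K^2_k)^\top R_1K^2_k$ has been dropped, so its backward induction only proves $P^1_k\succeq0$.

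Your instinct to add a hypothesis is the honest repair. State it as exactly what the termwise argument uses, namely $P^1_k\succeq\Phi^1_k$ for $k=0,\dots,N$; this is a real restriction on the data, not a consequence of the model.

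Separately, your identity step is only asserted, and two pieces do not obviously fit:
\begin{itemize}
\item $\Delta_k$ in \eqref{delta} contains $-\operatorname{Tr}(\Sigma^2_{k|k-1}A^\top\Phi^1_{k+1}AG^1_{k|k-1}C_1)$. This pairs estimator~2's covariance with estimator~1's gain, so it is not absorbed by $G^1_{k|k-1}C_1\Sigma^1_{k|k-1}=\Sigma^1_{k|k-1}-\Sigma^1_{k|k}$.
\item The ``terms involving $K^1$ and $K^2$'' that your sketch generates when trading $A^\top\Delta P^1_{k+1}A$ for $\Delta P^1_k$ have no counterpart in \eqref{eq:gap_decomp_final}.
\end{itemize}
That computation has to be carried out explicitly before the sign question even arises.
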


\begin{proof}
See Appendix \ref{app2}.
\end{proof}

\begin{remark}
% \textcolor{blue}{
Beyond the characterization of the Nash equilibrium for Problem~\ref{prob1}, we further investigate how the information structure affects the cost for the corresponding player. Utilizing the tower property of conditional expectation, the monotonicity of the error covariance with respect to the information set is established in Lemma~\ref{lem2}. By exploiting the positive semidefiniteness of the weighting matrices, we subsequently demonstrate that the equilibrium cost is also monotone with respect to the information structure, as evidenced by \eqref{eq:gap_decomp_final}.
\end{remark}

% Although Theorem 1 shows that the optimal action feedback gain can be computed independently of the estimation gain, it follows from Lemma 1 that the estimation gain depends on the control gain. To resolve this coupling issue, we will investigate the steady-state solution in the subsequent sections.}
% \end{remark}

\subsection{Steady-State Kalman filter and Nash Equilibrium}
 % obtained in Lemma \ref{kf_lem} and Theorem~\ref{thm1}.

% To address the coupling difficulty of estimation gain and state-feedback gain, 
In this subsection, we investigate the convergence of the Riccati matrices and derive the corresponding steady-state Kalman filter solutions and Nash equilibrium strategies.

First, we make the following standard assumption.
\begin{assumption}
\label{ass2}
The pairs \((A, B)\) and \((A, B_2)\) are stabilizable, and the pairs
\((A, Q_1^{1/2})\) and \((A, Q_2^{1/2})\) are detectable. 
\end{assumption}
According to Assumption~\ref{ass2}, suppose that the coupled algebraic Riccati equations admit positive-definite stabilizing solutions \(P^1\succ0\), \(P^2\succ0\), \(\Phi^1\succ0\), and \(\Phi^2\succ0\), which are given by
\begin{equation}\label{Ric2}
\left\{
\begin{aligned}
&P^1 = (A + BK^1)^\top P^1 (A + BK^1) + Q_1 + (K^1)^\top \Gamma_1 K^1,\\
&P^2 = (A + B_2K^2)^\top P^2 (A + B_2K^2) + Q_2 + (K^2)^\top R_2 K^2,\\
&\Phi^1 = (A + B_2K^2)^\top \Phi^1 (A + B_2K^2) + (K^2)^\top R_1 K^2,\\
&\Phi^2 = (A + BK^1)^\top \Phi^2 (A + BK^1) + (K^1)^\top \Gamma_2 K^1 + Q_2,
\end{aligned}
\right.
\end{equation}
where
\begin{align*}
&K^1 = -(H^1)^{-1}B^\top P^1 A,\qquad H^1 = \Gamma_1 + B^\top P^1 B,\\
&K^2 = -(H^2)^{-1}B_2^\top P^2 A,\qquad H^2 = R_2 + B_2^\top P^2 B_2, 
\end{align*}
together with the corresponding estimation error covariance satisfy
\begin{small}
\begin{equation}\label{skf2}
\begin{aligned}
\Sigma^1
&=(A(I-G^{1}C_1)\hspace{-1mm}+\hspace{-1mm}B_2K^2)(\Sigma^1-\Sigma^2)
(A(I-G^{1}C_1)\hspace{-1mm}+\hspace{-1mm}B_2K^2)^{\top}  \\
&\hspace{-1mm}+\hspace{-1mm}A(I-G^{1}C_1)\Sigma^2(I-G^{1}C_1)^{\top}A^{\top}
\hspace{-1mm}+\hspace{-1mm}A G^{1} Q_{v_1} (G^{1})^{\top}A^{\top}\hspace{-1mm}+\hspace{-1mm}Q_w,\\
G^{1}&= \Sigma^{1} C_1^{\top}\big(C_1 \Sigma^{1} C_1^{\top}+Q_{v_1}\big)^{-1}.
\end{aligned}
\end{equation}
\end{small}

In view of Assumption \ref{ass2}, we present the following convergence analysis of the coupled matrix recursions.
\begin{lemma}
\label{lem:riccati_conv}
Under Assumption 2, as the horizon tends to infinity, the Riccati recursions $\{P_k^i, \Phi_k^i\}$, feedback gains $\{K_k^i\}$, and estimation error covariances $\{\Sigma_{k|k-1}^i\}$ for $i \in \{1,2\}$ converge to their respective unique steady-state stabilizing solutions defined in \eqref{Ric2}.
\end{lemma}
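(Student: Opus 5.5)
The convergence claim splits into three layers that decouple in a definite order: the estimator-2 covariance, the Riccati/gain recursions, and finally the estimator-1 covariance.

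\textbf{Layer 1: estimator 2.} The recursion \eqref{kf2} for $\Sigma_{k|k-1}^2$ is a standard filtering Riccati difference equation. It does not involve any control gain. Hence, by observability of $(A,C)$ (assumed after \eqref{yy1}) and the standard Kalman convergence theorem, $\Sigma^2_{k|k-1}\to\Sigma^2$ from any initial $\sigma\succeq0$. I take stabilizability of $(A,Q_w^{1/2})$ as implicit, as is customary. The same argument with the observable pair $(A,C_1)$ controls the filtering part of $G^1_{k|k-1}$.

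\textbf{Layer 2: Riccati recursions and gains.} The recursions \eqref{eq:Ric1_clean} and \eqref{eq:Ric2_clean} run backward from $N+1$. I reindex by the time-to-go $j=N+1-k$ and view them as a forward iteration $P^{1}(j+1)=\mathcal{R}_1(P^1(j))$, which is what the paper calls the forward iterative method.

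Substituting \eqref{k1} into the $P^1$ recursion gives the standard control Riccati map for $(A,B,Q_1,\Gamma_1)$. Stabilizability of $(A,B)$ and detectability of $(A,Q_1^{1/2})$ give monotone boundedness of $P^1(j)$ and convergence to the unique stabilizing $P^1$. Then $K^1_k\to K^1$, and $A+BK^1$ is Schur. The same reasoning with $(A,B_2,Q_2,R_2)$ gives $P^2(j)\to P^2$, $K^2_k\to K^2$, and $A+B_2K^2$ Schur.

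Once the gains converge, $\Phi^1$ and $\Phi^2$ satisfy time-varying Lyapunov (Stein) recursions. Their transition matrices converge to Schur matrices and their forcing terms converge. For example, $\Phi^1(j+1)=F_j^\top\Phi^1(j)F_j+W_j$ with $F_j\to A+B_2K^2$ and $W_j\to(K^2)^\top R_1K^2$. A standard perturbation argument for exponentially stable linear recursions shows they converge to the unique solutions of the corresponding equations in \eqref{Ric2}.

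\textbf{Layer 3: estimator 1.} The recursion \eqref{kf1} for $\Sigma^1_{k|k-1}$ is the main obstacle, because it is driven by $K^2_k$ and $\Sigma^2_{k|k-1}$ and the horizon direction matters. $K^2_k$ comes from a backward recursion, so for a finite horizon it converges only as $N-k\to\infty$. I would therefore first fix the stationary gain $K^2$ and the limit $\Sigma^2$, which amounts to the infinite-horizon limit. I would then study the forward map
\[
\Sigma\mapsto (A(I-G^1(\Sigma)C_1)+B_2K^2)(\Sigma-\Sigma^2)(\cdot)^\top+A(I-G^1(\Sigma)C_1)\Sigma^2(\cdot)^\top+AG^1(\Sigma)Q_{v_1}G^1(\Sigma)^\top A^\top+Q_w .
\]

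By Lemma~\ref{lem2}, $\Delta\Sigma=\Sigma^1-\Sigma^2\succeq0$. Writing $G^1$ as the optimal gain, the map is a minimum over gains of affine-in-$\Sigma$ expressions, which makes it monotone and concave on $\{\Sigma\succeq\Sigma^2\}$. Monotonicity plus a uniform upper bound lets me iterate from the initial condition $\sigma$. The upper bound comes from comparing with a fixed suboptimal gain $L$ such that $A-ALC_1$ is Schur, which exists by observability of $(A,C_1)$, together with $A+B_2K^2$ Schur. This sandwiches the iterates between sequences started from $0$ and from a large matrix, and both converge.

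Uniqueness of the fixed point $\Sigma^1$ then follows the standard Kalman argument: the closed-loop error dynamics, with matrices $A(I-G^1C_1)$ and $A+B_2K^2$, are stable at any fixed point. Finally, a perturbation argument transfers convergence from the stationary-gain recursion back to the time-varying one, since the driving terms $K^2_k$ and $\Sigma^2_{k|k-1}$ converge. This yields convergence of $G^1_{k|k-1}$ and all quantities to \eqref{Ric2}–\eqref{skf2}.
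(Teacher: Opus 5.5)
Your Layers 1 and 2 match the paper's argument: standard DARE convergence for $P^1,P^2$ after reindexing into a forward iteration, convergence of the gains by continuity, and convergence of $\Phi^1,\Phi^2$ as Stein recursions whose limiting closed-loop matrices are Schur. The paper only asserts that convergence of $K_k^2$ implies convergence of $\Sigma^1_{k|k-1}$, so Layer 3 is where you add something. Its central step is false: the $\Sigma^1$-map is in general \emph{not} a minimum over gains of affine maps.

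Let $g(\Sigma,L)$ be your map with $G^1(\Sigma)$ replaced by a fixed $L$, and put $f(\Sigma)=g(\Sigma,G^1(\Sigma))$, $\Delta=\Sigma-\Sigma^2$, $S=C_1\Sigma C_1^\top+Q_{v_1}$. The gain enters through $A[(I-LC_1)\Sigma(I-LC_1)^\top+LQ_{v_1}L^\top]A^\top$. It also enters linearly through the cross terms $A(I-LC_1)\Delta(B_2K^2)^\top+B_2K^2\Delta(I-LC_1)^\top A^\top$. Completing the square in $AL$ gives, for invertible $A$,
\[
f(\Sigma)=\min_L g(\Sigma,L)+B_2K^2\Delta C_1^\top S^{-1}C_1\Delta (B_2K^2)^\top ,
\]
with minimizer $AL=(A\Sigma+B_2K^2\Delta)C_1^\top S^{-1}$ rather than $AG^1(\Sigma)=A\Sigma C_1^\top S^{-1}$. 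So $G^1$ is optimal only for the measurement update, and the extra term is of quadratic-over-linear type. As a result you lose concavity, the comparison $f(\Sigma)\preceq g(\Sigma,L)$ behind your upper bound, and the standard Kalman uniqueness argument.

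Your stability claim also targets the wrong matrix. The predictor in \eqref{kf} omits $B_2\tilde u_k^2=B_2K^2(e_k^1-e_k^2)$, so $e^1_{k+1}=(A(I-G^1C_1)+B_2K^2)e^1_k-B_2K^2e^2_k-AG^1v^1_k+w_k$. What you need is that the \emph{sum} $A(I-G^1C_1)+B_2K^2$ be Schur. That does not follow from $A(I-G^1C_1)$ and $A+B_2K^2$ being Schur separately.

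This step cannot be repaired under Assumptions~\ref{ass1}--\ref{ass2} alone. Take the scalar case $A=3$, $B_1=B_2=C_1=C_2=1$, with all weights and noise variances equal to $1$. Then $\Sigma^2=(5+\sqrt{27})/2\approx5.10$, $P^2=(9+\sqrt{85})/2$, $B_2K^2\approx-2.70$, and
\[
f(\Sigma^2)\approx 8.52,\qquad f'(\Sigma)\approx 7.31-\frac{89.9}{(\Sigma+1)^2}\ge 4.8\quad\text{for }\Sigma\ge\Sigma^2 .
\]
Hence $f$ is increasing and strictly convex, with $f(\Sigma)>\Sigma$ on $[\Sigma^2,\infty)$. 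Consequently \eqref{skf2} has no solution with $\Sigma^1\succeq\Sigma^2$, and the $\Sigma^1$ iterates diverge geometrically; along them $A(I-G^1C_1)+B_2K^2<-2$. In the same example the fixed-gain map with $L=0$ is a stable affine map, so the comparison $f\preceq g(\cdot,0)$ visibly fails.

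A correct Layer 3 therefore needs an extra hypothesis, not just a sharper proof. One option is to assume a fixed point exists at which $A(I-G^1C_1)+B_2K^2$ is Schur, and then give a local contraction argument. The paper's one-line assertion does not address this obstruction either.
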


\begin{proof}
Under Assumption \ref{ass2}, the solutions \(P^1\), \(P^2\), \(\Phi^1\), \(\Phi^2\),  \(\Sigma^1\) and \(\Sigma^2\) exist uniquely.
To compute \(P^1\), \(P^2\), \(\Phi^1\) and \(\Phi^2\) in \eqref{Ric2}, we consider the following forward iterations
\begin{equation}\label{Ric0}
\left\{
\begin{aligned}
P_{k+1}^1
&=
(A+BK_k^1)^\top P_{k}^1 (A+BK_k^1)
+Q_1+(K_k^1)^\top \Gamma_1 K_k^1, \\
P_{k+1}^2
&=
(A+B_2K_k^2)^\top P_{k}^2 (A+B_2K_k^2)
+Q_2+(K_k^2)^\top R_2 K_k^2,\\
\Phi_{k+1}^1
&=
(A+B_2K_k^2)^\top \Phi_{k}^1 (A+B_2K_k^2)
+(K_k^2)^\top R_1 K_k^2, \\
\Phi_{k+1}^2
&=
(A+BK_k^1)^\top \Phi_{k}^2 (A+BK_k^1)
+(K_k^1)^\top \Gamma_2 K_k^1 + Q_2,
\end{aligned}
\right.
\end{equation}
where
\begin{align*}
K_k^1& = -(H_k^1)^{-1}B^\top P_{k}^1A,\quad H_k^1= \Gamma_1 + B^\top P_{k}^1 B,\\
K_k^2&= -(H_k^2)^{-1}B_2^\top P_{k}^2A,
\quad H_k^2= R_2 + B_2^\top P_{k}^2 B_2
\end{align*}
with initial values $P_0^1=P_0^2=\Phi_0^1=\Phi_0^2=aI (a>0)$. 

When the time $k$ is large enough, under Assumptions \ref{ass1} and \ref{ass2}, the coupled backward Riccati recursions \eqref{eq:Ric1_clean}, \eqref{eq:Ric2_clean} and the coupled forward Riccati recursions \eqref{Ric0}
will converge to the same algebraic Riccati equation \eqref{Ric2}. In other words, the backward Riccati equations \eqref{eq:Ric1_clean}, \eqref{eq:Ric2_clean} have been transformed into forward Riccati equations, which can be computed simultaneously.
Under Assumption~\ref{ass2}, the pair \((A,B)\) is stabilizable and \((A,Q_1^{1/2})\) is detectable, while the pair \((A,B_2)\) is stabilizable and \((A,Q_2^{1/2})\) is detectable. Hence, by standard convergence theory for discrete-time algebraic Riccati equations, the finite-horizon Riccati sequences \(P_k^1\) and \(P_k^2\) converge, as the horizon tends to infinity, to the unique stabilizing solutions \(P^1\) and \(P^2\) of \eqref{Ric2}. 

Since \(\Gamma_1\succ 0\), \(R_2\succ 0\), it follows that \(H_k^1 \succ 0\) and \(K_k^2 \succ 0\) for all $k \in \{0, \dots, N\}$, and hence \(H_k^1\) and \(H_k^2\) are invertible. Therefore, by continuity of matrix multiplication, the feedback gains \(K_k^1\) and \(K_k^2\) will converge to \(K^1\) and \(K^2\) in \eqref{Ric2}. Moreover, since the recursion for $\Sigma_{k|k-1}^1$ depends on $K_k^2$, the convergence of $K_k^2$ further implies the convergence of $\Sigma_{k|k-1}^1$.

In addition, the recursions for \(\Phi_k^1\) and \(\Phi_k^2\) are linear matrix recursions driven by the closed-loop matrices \(A+B_2K_k^2\) and \(A+BK_k^1\), respectively. As \(K_k^1\), \(K_k^2\) converge to \(K^1\) and \(K^2\), the coefficients in the above recursions converge to the
constant matrices associated with the limiting closed-loop systems.
Moreover, because \(A+BK^1\) and \(A+B_2K^2\) are Schur stable,
the corresponding algebraic equations admit unique solutions \(\Phi^1\) and \(\Phi^2\). Therefore, \(\Phi_k^1\) and \(\Phi_k^2\) converges to \(\Phi^1\) and \(\Phi^2\). This completes the proof.
\end{proof}

According to the above convergence analysis, the steady-state Nash equilibrium strategies can be characterized as follows.
\begin{theorem}
\label{thm:ss_NE}
Given the convergence results in Lemma~\ref{lem:riccati_conv}, let $P^i$, $\Phi^i$, $K^i$ and $\Sigma^i$ ($i=1,2$) be the positive-definite stabilizing solutions to \eqref{Ric2} and \eqref{skf2}, respectively. Then, the steady-state Nash equilibrium strategies are uniquely determined by:
\begin{align}
u_k^{1,*} &= [I_{m_1}, 0]K^1\hat{x}_{k|k-1}^1, \notag\\
u_k^{2,*} &=[I_{m_1}, 0]K^1\hat{x}_{k|k-1}^1+[0, I_{m_2}]K^2(\hat{x}_{k|k-1}^2-\hat{x}_{k|k-1}^1).
\label{thm_u2}
\end{align}
Moreover, these equilibrium strategies render the closed-loop system mean-square bounded. The corresponding steady-state estimations are given by
\begin{equation}\label{skf1}
\left\{
\begin{aligned}
\hat{x}_{k|k}^1&=\hat{x}_{k|k-1}^1+G^1(y_k^1-C_1\hat{x}_{k|k-1}^1),\\
\hat{x}_{k|k-1}^1&=A\hat{x}_{k-1|k-1}^1+B\hat{u}_{k-1},\\
\hat{x}_{k|k}^2&=\hat{x}_{k|k-1}^2+G^2(y_k-C\hat{x}_{k|k-1}^2),\\
\hat{x}_{k|k-1}^2&=A\hat{x}_{k-1|k-1}^2+B\hat{u}_{k-1}+B_2\tilde{u}_{k-1}^2.
\end{aligned}
\right.
\end{equation}
\end{theorem}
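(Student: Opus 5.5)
The plan is to derive the theorem as a limiting version of Theorem~\ref{thm1}, with Lemma~\ref{lem:riccati_conv} supplying the limits. After that, mean-square boundedness is checked directly on an augmented closed-loop system.

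\textbf{Step 1: steady-state strategies and filter.} For a fixed horizon $N$, Theorem~\ref{thm1} gives the equilibrium in the form \eqref{thm_utilde}--\eqref{thm_u1}, with time-varying gains $K_k^1,K_k^2$ from \eqref{k1}, \eqref{kk2}. The filter gains $G^1_{k|k-1},G^2_{k|k-1}$ come from Lemma~\ref{kf_lem}.
\begin{itemize}
\item By Lemma~\ref{lem:riccati_conv}, as $N\to\infty$ (for fixed $k$) the matrices $P_k^i,\Phi_k^i,K_k^i$ converge to the stabilizing solutions of \eqref{Ric2}.
\item By the same lemma, $\Sigma^i_{k|k-1}$ converges as $k\to\infty$ to $\Sigma^i$, solving \eqref{skf2} and the standard filter ARE. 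Observability of $(A,C_1)$ and $(A,C)$ guarantees these limits.
\item The inverses in $G^1_{k|k-1}$ and $G^2_{k|k-1}$ stay well defined because $Q_{v_1}$ and $Q_v$ are nonsingular covariances. By continuity, $G^i_{k|k-1}\to G^i$.
\item Substituting the limiting gains into \eqref{thm_utilde} and extracting components via \eqref{thm_u1} yields \eqref{thm_u2}. Substituting them into \eqref{kf} yields \eqref{skf1}.
\end{itemize}

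\textbf{Step 2: uniqueness.} Uniqueness follows from uniqueness of the stabilizing solutions $P^1,P^2$, since the gains $K^1,K^2$ are explicit functions of them. Given $K^1,K^2$, the matrices $\Phi^1,\Phi^2$ solve Lyapunov equations driven by Schur-stable matrices, so they are also unique. The equilibrium property is inherited from the finite-horizon best-response characterization, passed to the limit. For each player, the best response to the opponent's limiting gain is again governed by the same algebraic Riccati equation.

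\textbf{Step 3: mean-square boundedness.} Write the closed loop in the coordinates $\xi_k=[\hat x_{k|k-1}^1;\ d_k;\ e_k^2]$.
\begin{itemize}
\item Combining \eqref{rs1}, \eqref{skf1} and \eqref{thm_u2} gives $\xi_{k+1}=\mathcal A\xi_k+\mathcal W\eta_k$, where $\eta_k=[w_k;v_k]$ is white.
\item I expect $\mathcal A$ to be block triangular. The $e^2$-block $A(I-G^2C)$ is Schur by the steady-state Kalman filter theory.
\item The $d$-block involves $A+B_2K^2$ together with filter-innovation terms. Its dynamics are driven by $e_k^2$ and by the innovation of filter 1.
\item The $\hat x^1$-block is $A+BK^1$, which is Schur.
\end{itemize}
The main obstacle is showing that the $d$-block is stable. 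It is not simply $A+B_2K^2$, because filter 1's prediction ignores $\tilde u^2$, as visible in \eqref{skf2}. To handle it, I would not work with $d_k$'s recursion directly. Instead I would use $d_k=e_k^1-e_k^2$ together with the boundedness of $\operatorname{Cov}(e_k^1)=\Sigma^1_{k|k-1}$, which converges to $\Sigma^1$ by Lemma~\ref{lem:riccati_conv}, and of $\operatorname{Cov}(e_k^2)$. This gives $\sup_k\mathbb E\|d_k\|^2<\infty$ directly.

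With $d_k$ and $e_k^2$ bounded, $\hat x_{k|k-1}^1$ obeys a Schur-stable system driven by bounded-second-moment inputs, so it is bounded as well. Since $x_k=\hat x_{k|k-1}^1+d_k+e_k^2$, we get $\sup_k\mathbb E\|x_k\|^2<\infty$. The controls are linear in bounded quantities, so they are mean-square bounded too.
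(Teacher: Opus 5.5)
Your Steps~1 and~2 follow the paper's proof. You take the limiting gains from Lemma~\ref{lem:riccati_conv}, substitute them into the strategies of Theorem~\ref{thm1} and the filter \eqref{kf}, and appeal to uniqueness of the stabilizing DARE solutions; $P^1,P^2$ in \eqref{Ric2} are in fact uncoupled standard DAREs, so this part is fine. A small point: extracting blocks from \eqref{thm_u1} actually gives $u_k^{2,*}=[0,\,I_{m_2}]K^1\hat x_{k|k-1}^1+K^2(\hat x_{k|k-1}^2-\hat x_{k|k-1}^1)$, since $K^2\in\mathbb{R}^{m_2\times n}$. The printed \eqref{thm_u2} therefore has index typos, and your ``yields \eqref{thm_u2}'' inherits them. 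In Step~3 you are more careful than the paper, which only asserts that stabilizing $K^1,K^2$ make the closed loop stable. You are right that this is not enough. From \eqref{skf1},
\[
d_{k+1}=Md_k+A(G^2C-G^1C_1)e_k^2+AG^2v_k-AG^1v_k^1,\qquad M\triangleq A(I-G^1C_1)+B_2K^2 .
\]
So $\xi_k$ does have a block upper-triangular transition matrix with diagonal blocks $A+BK^1$, $M$, $A(I-G^2C)$. Nothing in Assumption~\ref{ass2} or in standard Riccati or filter theory makes $M$ Schur.

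The gap is in how you get around $M$. Your loop uses the constant gains $G^1,K^2$, yet you identify $\operatorname{Cov}(e_k^1)$ with $\Sigma^1_{k|k-1}$. That matrix is generated by \eqref{kf1} with the time-varying $G^1_{k|k-1}$ and $K_k^2$, so it is not the error covariance of the steady-state loop, and its convergence in Lemma~\ref{lem:riccati_conv} bounds nothing in your system. In your loop, take $e_k^2$ stationary, i.e.\ filter~2 initialized at $\Sigma^2$, so that $\mathbb{E}[d_k(e_k^2)^\top]=0$. Then $\Pi_k\triangleq\operatorname{Cov}(d_k)$ obeys $\Pi_{k+1}=M\Pi_kM^\top+N$, where $N=A(G^2-[G^1,\,0])(C\Sigma^2C^\top+Q_v)(G^2-[G^1,\,0])^\top A^\top\succeq 0$. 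Boundedness of this linear recursion is exactly stability of $M$ on the modes excited by $N$. Hence ``this gives $\sup_k\mathbb{E}\|d_k\|^2<\infty$ directly'' assumes what was to be shown.

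There are two ways to close the gap. The first is to read ``stabilizing solution of \eqref{skf2}'' as $\rho(M)<1$; your triangular argument then finishes at once. The second is to use the hypothesis on $\Sigma^1$ quantitatively. Subtracting the steady-state Kalman equation for $\Sigma^2$ from \eqref{skf2} gives $\Delta=M\Delta M^\top+N$ for $\Delta\triangleq\Sigma^1-\Sigma^2\succeq 0$ (Lemma~\ref{lem2} in the limit). Since $d_0=0$, induction then yields $\Pi_k\preceq\Delta$ for all $k$. This second route uses stationarity of $e_k^2$; otherwise the transient cross terms between $d_k$ and $e_k^2$ must be controlled separately.
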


\begin{proof}
From Theorem~\ref{thm1}, the equilibrium strategies depend on the time-varying solutions of the Riccati recursions.
By Lemma~\ref{lem:riccati_conv}, the state-feedback gains converge to constant matrices \(K^1\) and \(K^2\).
Substituting these steady-state gains into the equilibrium strategies \eqref{thm_u1} in Theorem~\ref{thm1} yields the steady-state Nash equilibrium in \eqref{thm_u2}.
Moreover, the gains \(K^1\) and \(K^2\) correspond to stabilizing solutions of the Riccati equations, which ensures the stability of the closed-loop system matrix. Therefore, the state process \eqref{rs1} is mean-square bounded.
% \textcolor{red}{
Finally, under the observability condition, the Kalman filtering recursions converge to the steady-state estimates described in \eqref{skf1}. This completes the proof.
% }
\end{proof}

\section{Numerical Example}\label{ne}
In this section, the results are illustrated by a numerical example. 
We consider system \eqref{s1} and cost function \eqref{cs} with the following  parameters
% \begin{small}
\begin{equation}
\begin{aligned}
&n=2, m_1=2, m
_2=2, p_1=2, p_2=2, N=50,\\
&A = \begin{bmatrix}
0.98 & 0.05 \\
0.02 & 0.96
\end{bmatrix},
B_1 = \begin{bmatrix}
0.40 & 0.00 \\
0.00 & 0.30
\end{bmatrix},
B_2 = \begin{bmatrix}
0.35 & 0.00 \\
0.00 & 0.40
\end{bmatrix},\\
&C_1 = \begin{bmatrix}
1 & 0 \\
0 & 0
\end{bmatrix},
C_2 = I_{n},
C = \begin{bmatrix} C_1 \\ C_2 \end{bmatrix}, \\
&Q_w = 0.06 \cdot I_{n}, \Sigma_{0|-1}^1=\Sigma_{0|-1}^2= 0.1I,\\
&R_1 = \operatorname{diag}\big(0.3,\; 0.3\big), R_2 = 0.01 \cdot I_{p_2}, R = \operatorname{diag}(R_1, R_2),\\
&Q_1 = 2.0 \cdot I_{n}, \quad Q_2 = 2.0 \cdot I_{n}, 
% Q_{1f} &= 1.0 \cdot I_{n_x}, \quad Q_{2f} &= 1.0 \cdot I_{n_x}, \\
\quad R_{1} = 2.0 \cdot I_{m_1}, \\
&R_{2} = 2.0 \cdot I_{m_2}, \quad S_{1} = 1.0 \cdot I_{m_2},  \quad S_{2} = 1.0 \cdot I_{m_1},\\
&\Gamma_1=\begin{bmatrix}
S_1 &  \\
    & R_1
\end{bmatrix}, \quad
\Gamma_2=\begin{bmatrix}
S_2 &  \\
    & R_2
\end{bmatrix}.
\end{aligned}
\end{equation}
% \end{small}
By computing the solution of the Riccati equations \eqref{eq:Ric1_clean} and \eqref{eq:Ric2_clean} iteratively, for all $ k \in \{0, \dots, N\}$, the parameter matrices $P_k^1$, $\Phi_k^1$, $P_k^2$, and $\Phi_k^2$ converge to the stable value $P^1$, $\Phi^1$, $P^2$, and $\Phi^2$, respectively, which are shown in Fig.~\ref{p1}-\ref{p2}. 
Furthermore, the optimal feedback gains $K_k^{1}$ and $K_k^{2}$ are illustrated in Fig.~\ref{kk12}.

As shown in Fig~\ref{xxx}, the estimates of the estimator 2 are closer to the real state values than the estimator 1, which implies that the estimator 2 is more accurate. Accordingly, for the horizon $N$, the estimation error covariance $\Sigma_{k|k-1}^1$ is strictly larger than $\Sigma_{k|k-1}^2$, as shown in Fig~\ref{sigmapred}. 

By applying the optimal actions given in \eqref{ssk1} and \eqref{thm_u1}, respectively, we can compare the total costs under the symmetric information and the asymmetric information case. The corresponding results are summarized in Table \ref{tab1}. 
It is observed that the optimal cost for player 1 under asymmetric information is higher than under symmetric information, then we can conclude that richer information reduces the cost of the corresponding player. This validates Theorem \ref{thm2}. 

\begin{figure}[H]
    \centering
\includegraphics[width=0.85\columnwidth]{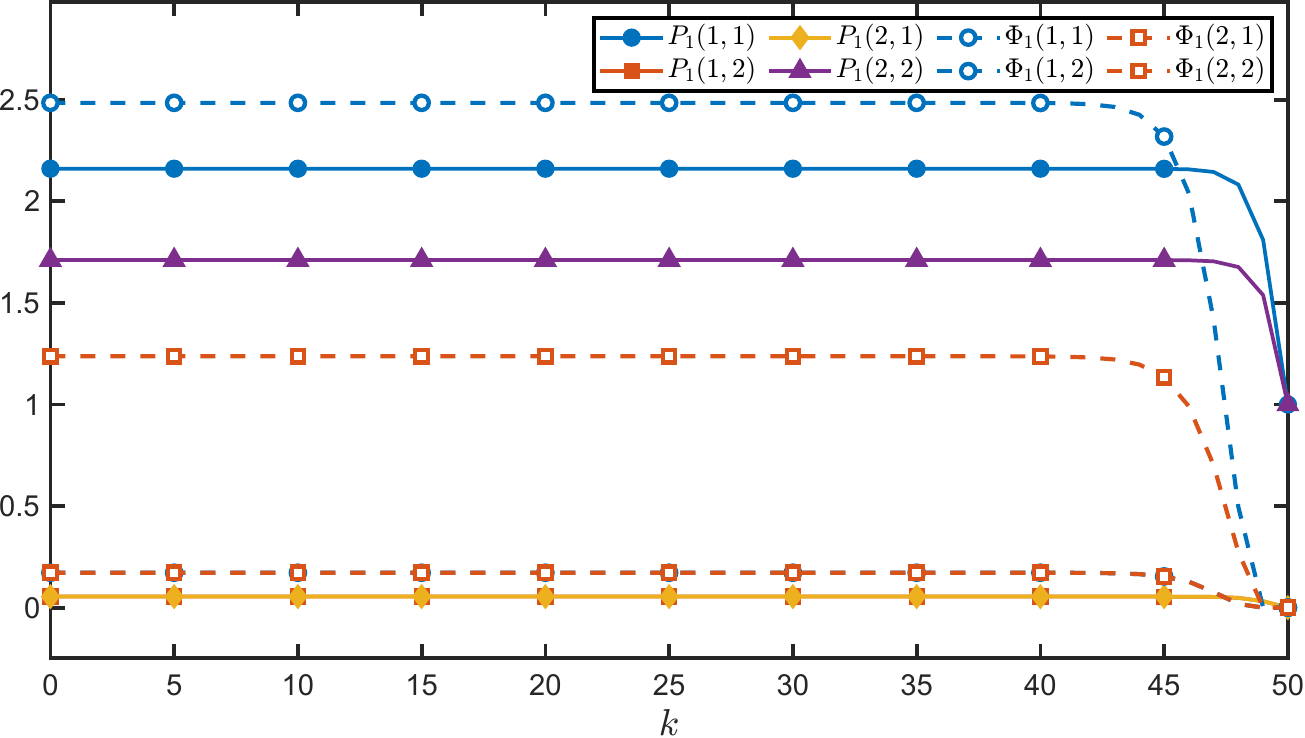}
    \caption{The Riccati matrices $P_k^1$, $\Phi_k^1$}
    \label{p1}
\end{figure}
\vspace{-2.5em}
\begin{figure}[H]
    \centering
\includegraphics[width=0.85\columnwidth]{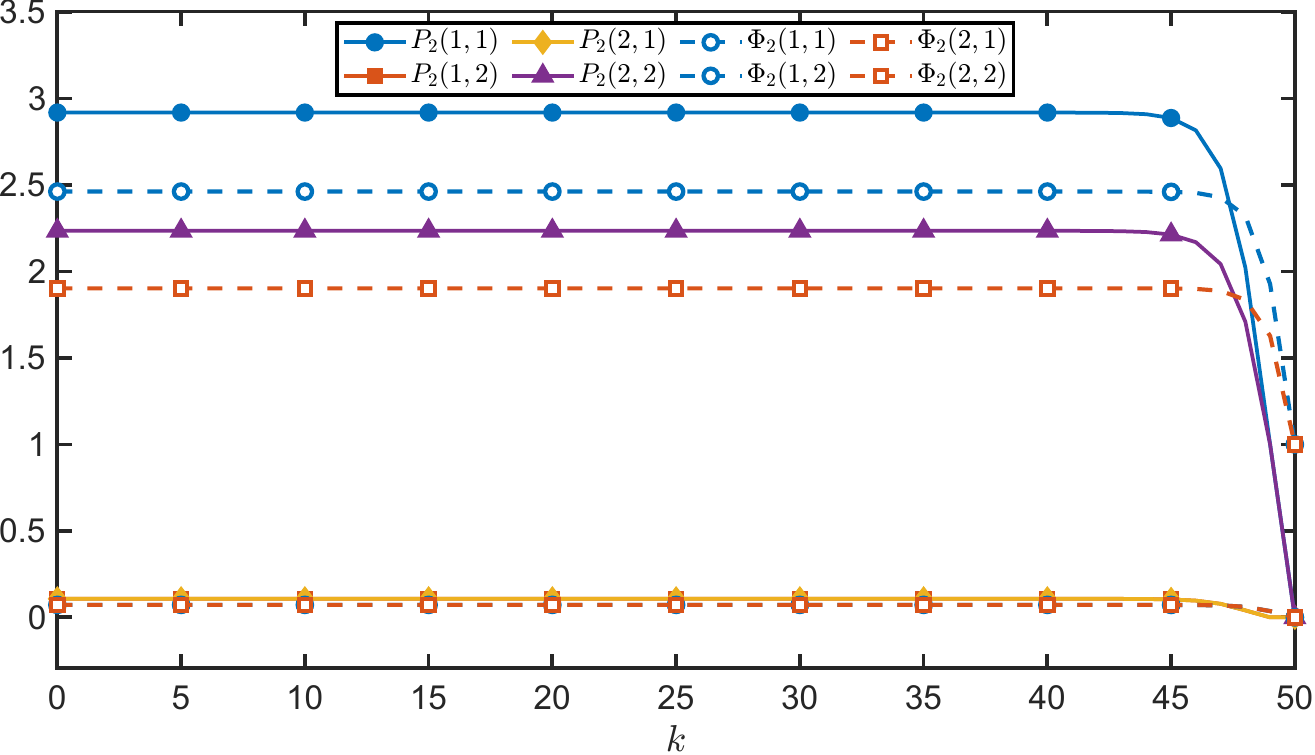}
    \caption{The Riccati matrices $P_k^2$, $\Phi_k^2$}
    \label{p2}
\end{figure}
\vspace{-1em}
\begin{figure}[H]
    \centering
\includegraphics[width=0.85\columnwidth]{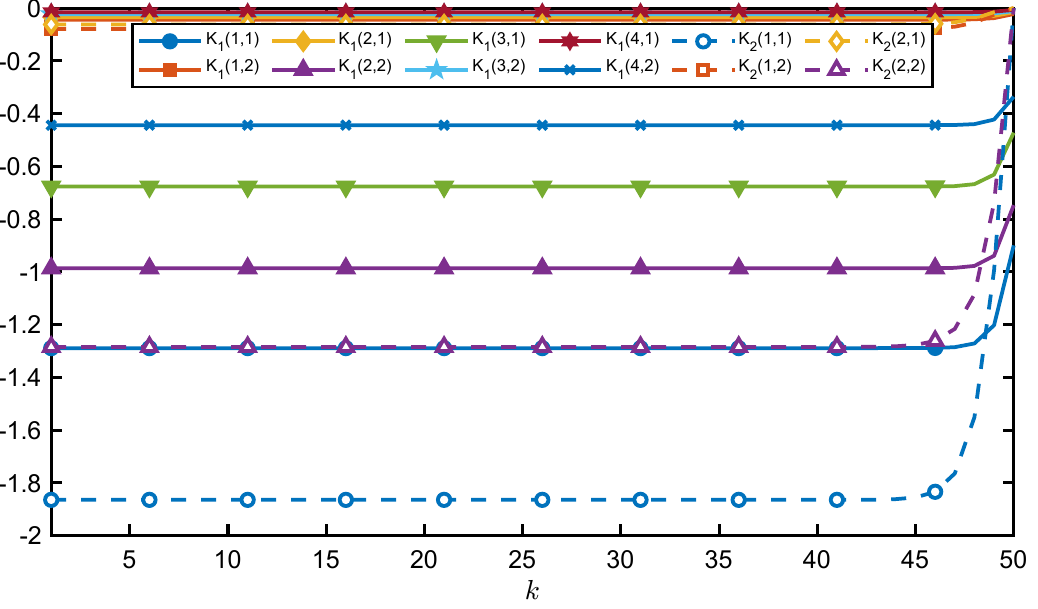}
    \caption{The feedback gains $K_k^1$, $K_k^2$}
    \label{kk12}
\end{figure}
\vspace{-1em}
\begin{figure}[H]
\centering\includegraphics[width=0.85\columnwidth]{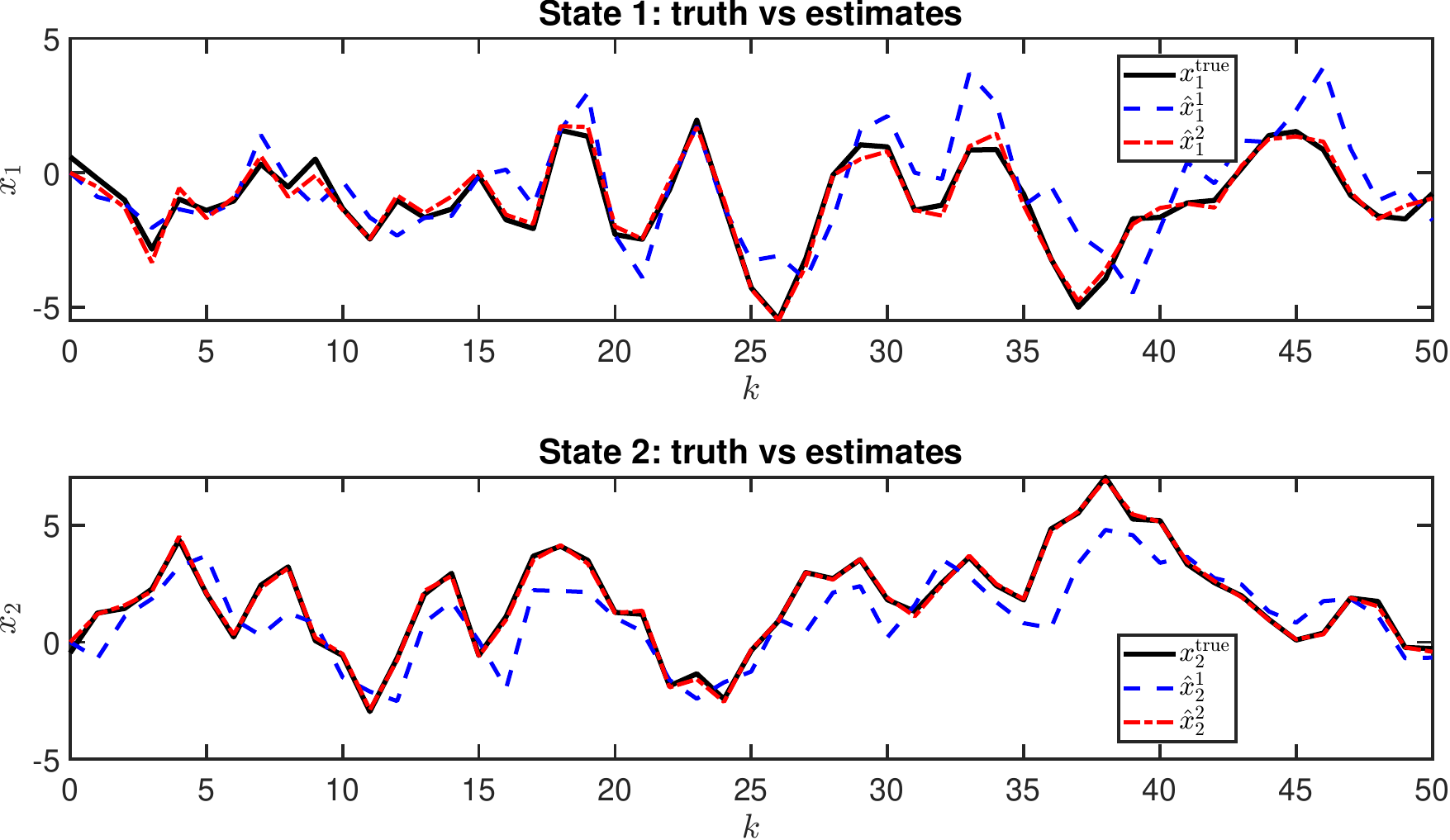}
    \caption{Trajectory of $x_k$, $\hat{x}_{k|k-1}^1$ and $\hat{x}_{k|k-1}^2$}
    \label{xxx}
\end{figure}
\vspace{-1em}
\begin{figure}[H]
    \centering
\includegraphics[width=0.85\columnwidth]{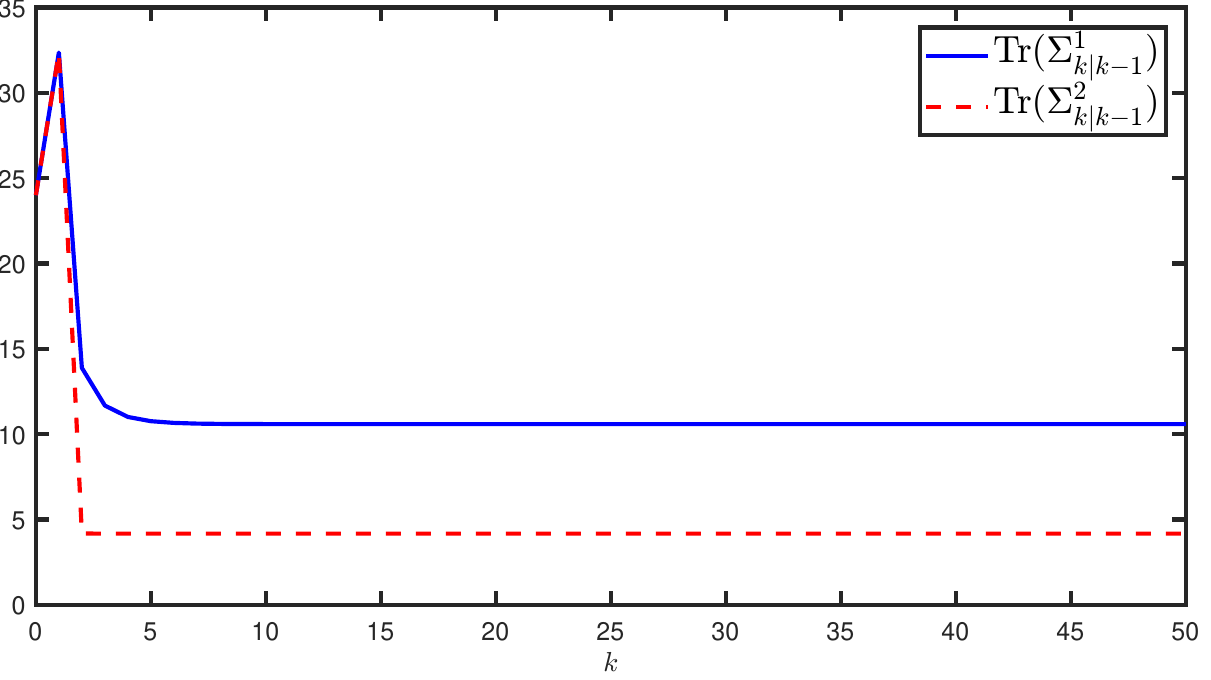}
    \caption{Comparison of $\operatorname{Tr}(\Sigma_{k|k-1}^1)$ and $\operatorname{Tr}(\Sigma_{k|k-1}^2)$}
    \label{sigmapred}
\end{figure}
% \vspace{-1em}
\begin{table}[H]
    \centering
    \caption{Optimal costs under symmetric and asymmetric information structures.}
    \label{tab1}
    \small
    \begin{tabular}{lcc}
        \toprule
        Information Structure & $J^{1}$ & $J^{2}$ \\
        \midrule
        Symmetric information  & 24.4764 & 25.1805 \\
        Asymmetric information & 27.3403 & 28.6230 \\
        \bottomrule
    \end{tabular}
\end{table}

\section{Conclusion}\label{cc}
% \textcolor{blue}{
This article studied noncooperative games under delayed and asymmetric information. By exploiting a common-private information decomposition approach, we characterize the Nash equilibrium via coupled Riccati equations within the class of linear strategies. We theoretically demonstrate that richer information sets help reduce the cost of the associated player. Furthermore, the convergence of the Nash equilibrium solutions is established via a forward iteration scheme under standard stability and detectability conditions. Numerical results validate the theoretical findings, and future work will generalize this framework to multiplayer systems with heterogeneous delay structures.
\appendix

% \section{Appendix A}

\subsection{Proof of Theorem \ref{thm1}}\label{app1}

\begin{proof}
On the one hand, suppose that
$\tilde{u}_k^{2,*}=K_k^2(\hat{x}_{k|k-1}^2-\hat{x}_{k|k-1}^1), ~\forall k \in \{0, \dots, N\}$ is fixed, in which $K_k^2$ satisfies \eqref{kk2}. 
Applying Lemma \ref{lem2}, the difference of the value function defined in \eqref{eq:v1_clean} can be given as
\allowdisplaybreaks
\begin{small}
\begin{align}
&\Delta {V}_k^1 = V_{k}^{1}-V_{k+1}^{1} \nonumber \\
&= \mathbb{E}\big[x_k^{\top} P_k^1 \hat{x}_{k|k-1}^1 + x_k^{\top} \Phi_k^1 (\hat{x}_{k|k-1}^2-\hat{x}_{k|k-1}^1) \big] \label{eq2} \\
&-\mathbb{E}\big[Ax_k\hspace{-0.5mm} + \hspace{-0.5mm}B\hat{u}_k\hspace{-0.5mm} +\hspace{-0.5mm} K_k^{2} (\hat{x}_{k|k-1}^2-\hat{x}_{k|k-1}^1)\hspace{-0.5mm} +\hspace{-0.5mm}w_k\big]^{\top} \nonumber \\
&\times P_{k+1}^1 \big[A\hat{x}_{k|k-1}^1\hspace{-1mm}+\hspace{-1mm} AG_{k|k-1}^{1}C_1(\hat{x}_{k|k-1}^2\hspace{-1mm}-\hspace{-1mm}\hat{x}_{k|k-1}^1)\hspace{-1mm}+\hspace{-1mm}AG_{k|k-1}^{1}v_k^1 \nonumber \\
& + B\hat{u}_k\big]\hspace{-1mm}-\hspace{-1mm}\mathbb{E}\big[Ax_k\hspace{-1mm} + \hspace{-1mm}B\hat{u}_k\hspace{-1mm} + \hspace{-1mm}K_k^{2} (\hat{x}_{k|k-1}^2\hspace{-1mm}-\hspace{-1mm}\hat{x}_{k|k-1}^1)\hspace{-1mm} +\hspace{-1mm}w_k\big]^{\top}\Phi_{k+1}^1 \nonumber \\
&\times\big[(A+B_2K_k^2)(\hat{x}_{k|k-1}^2-\hat{x}_{k|k-1}^1)\hspace{-1mm} \nonumber \\
&+\hspace{-1mm}AG_{k|k-1}^2C(x_k-\hat{x}_{k|k-1}^2)-\hspace{-1mm} AG_{k|k-1}^{1}C_1(\hat{x}_{k|k-1}^2-\hat{x}_{k|k-1}^1)\big] \nonumber \\
&=-\mathbb{E}\big[(\hat{u}_k-K_k^1\hat{x}_{k|k-1}^1)^{\top}(\Gamma_1+B^{\top}P_{k+1}^1B)(\hat{u}_k-K_k^1\hat{x}_{k|k-1}^1)] \nonumber \\
&+(\hat{x}_{k|k-1}^1)^{\top}(P_k^1\hspace{-1mm}-\hspace{-1mm}A^{\top}P_{k+1}^1A+(K_k^1)^{\top}H_k^1K_k^1)\hat{x}_{k|k-1}^1 \nonumber \\
&+(\hat{x}_{k|k-1}^2\hspace{-1mm}-\hspace{-1mm}\hat{x}_{k|k-1}^1)^{\top}(\Phi_k^1\hspace{-1mm}-\hspace{-1mm}(A\hspace{-1mm}+\hspace{-0.5mm}B_2K_k^2)^{\top}\Phi_{k+1}^1(A\hspace{-1mm} \nonumber \\
& +\hspace{-1mm}B_2K_k^2))(\hat{x}_{k|k-1}^2\hspace{-1mm}-\hspace{-1mm}\hat{x}_{k|k-1}^1)\big]\hspace{-1mm}+\hspace{-1mm} \operatorname{tr}\big[ \Sigma_{k|k-1}^1(-A^{\prime} P_{k+1}^1 A G_{k|k-1}^{1} C_1) \nonumber \\
&\hspace{-1mm} +\hspace{-1mm} (\Sigma_{k|k-1}^1 - \Sigma_{k|k-1}^2)\big((A\hspace{-1mm} +\hspace{-1mm} B_2 K_k^2)^{\prime}\Phi_{k+1}^1 A G_{k|k-1}^{1} C_1 \nonumber \\
&\hspace{-1mm}-\hspace{-1mm} (K_k^2)^{\prime} B_2^{\prime} \Phi_{k+1}^1 A G_{k|k-1}^{1} C_1\big)\hspace{-1mm} \nonumber \\
&+\hspace{-1mm}\Sigma_{k|k-1}^2(A^{\top}\Phi_{k+1}^1AG_{k|k-1}^{1}C_1\hspace{-1mm}-\hspace{-1mm}A^{\top}\Phi_{k+1}^1AG_{k|k-1}C)\big].
\end{align}
\end{small}
Using \eqref{eq:Ric1_clean}, it can be obtained that
\begin{small}
\begin{equation}\label{rrv1}
\begin{aligned}
&\Delta {V}_k^1
\hspace{-1mm}=\hspace{-1mm}-\mathbb{E}\big[(\hat{u}_k\hspace{-1mm}-\hspace{-1mm}K_k^1\hat{x}_{k|k-1}^1)^{\top}(\Gamma_1\hspace{-1mm}+\hspace{-1mm}B^{\top}P_{k+1}^1B)(\hat{u}_k\hspace{-1mm}-\hspace{-1mm}K_k^1\hat{x}_{k|k-1}^1)]\hspace{-1mm}\\
&\hspace{-1mm}+\hspace{-1mm}(\hat{x}_{k|k\hspace{-1mm}-1}^1)^{\top}Q_1\hat{x}_{k|k-1}^1\hspace{-1mm}\\
&+\hspace{-1mm}(\hat{x}_{k|k-1}^2\hspace{-1mm}-\hspace{-1mm}\hat{x}_{k|k-1}^1)^{\top}(K_k^2)^{\top}R_1K_k^2(\hat{x}_{k|k-1}^2\hspace{-1mm}-\hspace{-1mm}\hat{x}_{k|k-1}^1)\big]\\
 &\hspace{-1mm} +\hspace{-1mm} (\Sigma_{k|k-1}^1\hspace{-1mm} -\hspace{-1mm} \Sigma_{k|k-1}^2)\big((A\hspace{-1mm} +\hspace{-1mm} B_2 K_k^2)^{\top}\Phi_{k+1}^1 A G_{k|k-1}^{1} C_1 \\
  &\hspace{-1mm}-\hspace{-1mm} (K_k^2)^{\top} B_2^{\prime} \Phi_{k+1}^1 A G_{k|k-1}^{1} C_1\big)\hspace{-1mm}+ \hspace{-1mm}\operatorname{tr}\big[ \Sigma_{k|k-1}^1(-\hspace{-1mm}A^{\top} P_{k+1}^1 A G_{k|k-1}^{1} C_1)\\
  &+\hspace{-1mm}\Sigma_{k|k-1}^2(A^{\top}\Phi_{k+1}^1AG_{k|k-1}^{1}C_1\hspace{-1mm}-\hspace{-1mm}A^{\top}\Phi_{k+1}^1AG_{k|k-1}C)\big].
\end{aligned}
\end{equation}
\end{small}
By adopting \eqref{eec2}, and summing equation \eqref{rrv1} from $k=0$ to $k=N$, we have
{\small
\begin{align}\label{vvv1}
&\sum_{k=0}^N \Delta V_k^1\notag\\
&=\hspace{-1mm}\mathbb{E}\!\left[x_0^{\top} P_0^1 \hat{x}_{0|0}^1
+ x_0^{\top} \Phi_0^1 \bigl(\hat{x}_{0|0}^2-\hat{x}_{0|0}^1\bigr)\right] \notag\\
&\hspace{-1mm}- \mathbb{E}\!\left[x_{N+1}^{\top} P_{N+1}^1 \hat{x}_{N+1|N}^1
+ x_{N+1}^{\top} \Phi_{N+1}^1 \bigl(\hat{x}_{N+1|N}^2-\hat{x}_{N+1|N}^1\bigr)\right] \notag\\
&\hspace{-1mm}=\hspace{-1mm} -\sum_{k=0}^N \mathbb{E}\!\Big[
(\hat{u}_k\hspace{-1mm}-\hspace{-1mm}K_k^1 \hat{x}_{k|k-1}^1)^{\top}
(\Gamma_1+B^{\top}P_{k+1}^1 B)
(\hat{u}_k\hspace{-1mm}-\hspace{-1mm}K_k^1 \hat{x}_{k|k-1}^1)
\Big] \notag\\
&+ \sum_{k=0}^N \operatorname{tr}\!\Big[
\Sigma_{k|k-1}^1\bigl(Q_1 + A^\top P_{k+1}^1 A G_{k|k-1}^{1} C_1\bigr)
\notag\\
&+ (\Sigma_{k|k-1}^1-\Sigma_{k|k-1}^2)\Big(
-(A + B_2 K_k^2)^\top \Phi_{k+1}^1 A G_{k|k-1}^{1} C_1\notag\\
&+ (K_k^2)^\top B_2^\top \Phi_{k+1}^1 A G_{k|k-1}^{1} C_1
\Big)\notag\\
&+ \Sigma_{k|k-1}^2\bigl(
A^{\top}\Phi_{k+1}^1 A G_{k|k-1}^2 C
- A^{\top}\Phi_{k+1}^1 A G_{k|k-1}^{1} C_1
\bigr)
\Big],
\end{align}}
Substituting \(\tilde{u}_k^{2,*}=K_k^2(\hat{x}_{k|k-1}^2-\hat{x}_{k|k-1}^1)\) into \eqref{jj2} and using \eqref{vvv1}, we obtain
\begin{align}
&J^{1}\bigl(\{\hat{u}_k\}_{k=0}^N,\{\tilde{u}_k^{2,*}\}_{k=0}^N\bigr)
= \sum_{k=0}^N \mathbb{E}\!\Big[
x_k^{\top} Q_1 x_k
+ \hat{u}_k^{\top} \Gamma_1 \hat{u}_k\\
&+ (\hat{x}_{k|k-1}^2-\hat{x}_{k|k-1}^1)
\Big] + \mathbb{E}\!\left[x_{N+1}^{\top} P_{N+1}^1 x_{N+1}\right] \notag\\
&\hspace{-1mm}=\hspace{-1mm} -\sum_{k=0}^N \mathbb{E}\!\Big[
(\hat{u}_k\hspace{-1mm}-\hspace{-1mm}K_k^1 \hat{x}_{k|k-1}^1)^{\top}
(\Gamma_1\hspace{-1mm}+\hspace{-1mm}B^{\top}P_{k+1}^1 B)
(\hat{u}_k\hspace{-1mm}-\hspace{-1mm}K_k^1 \hat{x}_{k|k-1}^1)
\Big] \notag\\
& + \mathbb{E}\!\left[
x_0^\top P_0^1 \hat{x}_{0|-1}^1
+ x_0^\top \Phi_0^1 (\hat{x}_{0|-1}^2-\hat{x}_{0|-1}^1)
\right]+ \operatorname{tr}\!\left[\Sigma_{N+1|N}^1 P_{N+1}^1\right] \notag\\
& + \sum_{k=0}^N \operatorname{tr}\!\Big[
\Sigma_{k|k-1}^1\bigl(Q_1 + A^\top P_{k+1}^1 A G_{k|k-1}^{1} C_1\bigr)
\notag\\
&+ (\Sigma_{k|k-1}^1 - \Sigma_{k|k-1}^2)\Big(
-(A + B_2 K_k^2)^\top \Phi_{k+1}^1 A G_{k|k-1}^{1} C_1
\notag\\
&+ (K_k^2)^\top B_2^\top \Phi_{k+1}^1 A G_{k|k-1}^{1} C_1
\Big)\notag\\
&+ \Sigma_{k|k-1}^2\bigl(
A^\top \Phi_{k+1}^1 A G C
- A^\top \Phi_{k+1}^1 A G_{k|k-1}^{1} C_1
\bigr)
\Big]. \label{gjj}
\end{align}
To minimize \eqref{gjj}, it follows that  $\hat{u}_k^*=K_k^1\hat{x}_{k|k-1}^1, \forall k \in \{0, \dots, N\}$, where $K_k^1$ satisfies \eqref{eq:Ric1_clean}. Then the optimal cost function $J^{1}(\{\hat{u}_k^{*}\}_{k=0}^N,\{\tilde{u}_k^{2,*}\}_{k=0}^N)$ in \eqref{oos1} can be obtained. 
By a similar derivation, the optimal actions $\tilde{u}_k^{2,*}$ and corresponding optimal cost $J^{2}(\{\hat{u}_k^{*}\}_{k=0}^N,\{\tilde{u}_k^{2,*}\}_{k=0}^N)$ can be derived.
% gain $K_k^2$ and the corresponding optimal cost $J^{2}(\{\hat{u}_k^{*}\}_{k=0}^N,\{\tilde{u}_k^{2,*}\}_{k=0}^N)$.
\end{proof}

\subsection{Proof of Theorem \ref{thm2}}\label{app2}

\begin{proof} 
First of all, based on the definition of $\Delta_k$ in \eqref{delta}, we denote
\(
S_{k+1}=-A^\top \Delta P_{k+1}^1 A,
\)
together with the cyclic property of the trace yields
\begin{small}
\begin{align}
&\operatorname{Tr}(\Sigma_{k|k-1}^1 S_{k+1} G_{k|k-1}^{1} C_1)
-\operatorname{Tr}(\Sigma_{k|k-1}^2 S_{k+1} G_{k|k-1}^2 C) \notag\\
&\qquad
= \operatorname{Tr}(\Delta P_{k+1}^1 A \Delta\Sigma_{k|k} A^\top)
-\operatorname{Tr}(\Delta P_{k+1}^1 A \Delta\Sigma_{k|k-1} A^\top).
\end{align}
\end{small}
Moreover, by Lemma~\ref{kf_lem}, it follows that
\begin{align}
&G_{k|k-1}^{1} C_1 \Sigma_{k|k-1}^1
-
G_{k|k-1}^2 C \Sigma_{k|k-1}^2=
\Delta\Sigma_{k|k-1}-\Delta\Sigma_{k|k}.
\end{align}
Substituting this identity into \eqref{jj3} yields
% \begin{align}
% &\Delta_k= \operatorname{Tr}\left[Q_1 \Delta\Sigma_{k|k-1}
% + \Delta P_{k+1}^1 \Delta\Sigma_{k+1|k}\notag\\
% &-\Delta P_k^1 \Delta\Sigma_{k|k-1} - \Delta P_{k+1}^1 \Delta_k^u\right].
% \end{align}
\begin{align}\label{delta2}
\Delta_k
&= \operatorname{Tr}\Big[
Q_1 \Delta \Sigma_{k|k-1}
+ \Delta P_{k+1}^1 \Delta \Sigma_{k+1|k} \notag\\
&\qquad
+ \Delta P_k^1 \Delta \Sigma_{k|k-1}
- \Delta P_{k+1}^1 \Delta_k^u
\Big],
\end{align}
by summing the above equality over the horizon $k=0,\ldots,N$ then gives the decomposition in \eqref{delta}.

Now, to complete the proof, we need to represent that each term of \eqref{eq:gap_decomp_final} is nonnegative.
% As a result, it is desirable to be such that each term is non-negative to prove \eqref{eq:gap_decomp_final} holds.

First, due to $\Phi_0^1 \succeq 0$ and $\Delta\Sigma_{k|k-1}\succeq 0$, hence the quadratic term 
\(
(\hat{x}_{0|-1}^2-\hat{x}_{0|-1}^1)^\top \Phi_0^1 (\hat{x}_{0|-1}^2-\hat{x}_{0|-1}^1)
\)
is nonnegative. Likewise, since $Q_1 \succeq 0$ and $\Sigma_{k|k-1}^1-\Sigma_{k|k-1}^2=\Delta\Sigma_{k|k-1}\succeq 0$, we have
\(
\operatorname{Tr}\bigl(Q_1(\Sigma_{k|k-1}^1-\Sigma_{k|k-1}^2)\bigr)\ge 0.
\)

Second, consider the weighting matrix difference $\Delta P_k^1$. From \eqref{eq:Ric1_clean}, it satisfies the backward recursion
\begin{align}
\Delta P_k^1
&= (A+BK_k^1)^\top \Delta P_{k+1}^1 (A+BK_k^1) + Q_1 \notag\\
&\quad + (K_k^1)^\top \Gamma_1 K_k^1
+ (A+BK_k^1)^\top \Phi_{k+1}^1 (A+BK_k^1).
\end{align}
We next show, by backward induction, that
\(
\Delta P_k^1 \succeq 0
\)
for all \(k=0,\ldots,N\).
At the terminal step, since $P_{N+1}^1=I$ and $\Phi_{N+1}^1=0$, we have
\(
\Delta P_{N+1}^1=I\succ 0.
\)
Together with $Q_1\succ 0$ and $\Gamma_1\succ 0$, this implies $\Delta P_N^1\succeq 0$.
Repeating the same argument backward for \(k=N-1,\dots,0\) yields
\(
\Delta P_k^1\succeq 0,\quad k=0,\ldots,N.
\)
Consequently, we have
\(
\operatorname{Tr} \bigl(\Delta P_{k+1}^1(\Sigma_{k+1|k}^1-\Sigma_{k+1|k}^2)\bigr)\ge 0
\)
and
\(
\text {Tr}\bigl(\Delta P_k^1(\Sigma_{k|k-1}^1-\Sigma_{k|k-1}^2)\bigr)\ge 0.
\)

Finally, for the residual term, since $\Sigma_{k|k-1}^1-\Sigma_{k|k-1}^2\succeq 0$ and $B_2K_k^2 \neq 0$, we have
\begin{align}
\Delta_k^u
=
B_2K_k^2\bigl(\Sigma_{k|k-1}^2-\Sigma_{k|k-1}^1\bigr)(K_k^2)^\top B_2^\top
\preceq 0.
\end{align}
Because of $\Delta P_{k+1}^1\succeq 0$, it follows that
\(
-\operatorname{Tr} \bigl(\Delta P_{k+1}^1\Delta_k^u\bigr)\ge 0.
\)

Based on the above results, each term in the equation \eqref{eq:gap_decomp_final} is a nonnegative value. We can conclude that  \eqref{eq:gap_decomp_final} holds. This completes the proof.
% Combining the above results and invoking the trace property, each term of \eqref{eq:gap_decomp_final} is nonnegative. 
\end{proof}

\end{document}
